\def\sideremark#1{\ifvmode\leavevmode\fi\vadjust{\vbox to0pt{\vss 
			\hbox to 0pt{\hskip\hsize\hskip1em          
				\vbox{\hsize3cm\tiny\raggedright\pretolerance10000%
					\noindent #1\hfill}\hss}\vbox to8pt{\vfil}\vss}}}%
\newtheorem{Thm}{Theorem}{\bfseries}{\itshape}
\newtheorem*{Thm*}{Theorem}{\bfseries}{\itshape}
\newtheorem{Cor}{Corollary}{\bfseries}{\itshape}
\newtheorem{Prop}[Cor]{Proposition}{\bfseries}{\itshape}
\newtheorem{Lem}[Cor]{Lemma}{\bfseries}{\itshape}
\newtheorem*{Lem*}{Lemma}{\bfseries}{\itshape}
{\bfseries}{\itshape}
{\bfseries}{\itshape}
\newtheorem{Def}[Cor]{Definition}{\bfseries}{\rmfamily}
{\scshape}{\rmfamily}
\newtheorem{Rem}[Cor]{Remark}{\scshape}{\rmfamily}
{\bfseries}{\itshape}
\renewcommand\ge{\geqslant} \renewcommand\le{\leqslant}
\let\tildeaccent=\~ \let\hataccent=\^
\renewcommand\~[1]{\widetilde{#1}}
\def\<{\left<} \def\>{\right>} \def\({\left(} \def\){\right)}
\def\norm#1{\Vert #1\Vert}
\let\parasymbol=\S \def\secref#1{\parasymbol\ref{#1}}
\def\pd#1#2{\frac{\partial#1}{\partial#2}}
\let\polishL=l \def\Zoladek.{\.Zol\c adek}
 \def\Im{\operatorname{Im}}
 \def\dist{\operatorname{dist}}
\def\etc.{\emph{etc}.}
 \def\R{{\mathbb R}}  \def\Z{{\mathbb
    Z}} \def\N{{\mathbb N}} \def\Q{{\mathbb Q}}
 \def\e{\varepsilon} \def\S{\varSigma}
\def\poly{\operatorname{poly}}
 \def\d{\,\mathrm d}
\def\cF{{\mathcal F}}
\def\cS{{\mathcal S}}
\def\cM{{\mathcal M}}
 \def\trans{\pitchfork}
\def\rest#1{{\vert_{#1}}}
\def\vx{{\mathbf x}}
\def\valpha{{\boldsymbol\alpha}}
\def\alg{\mathrm{alg}}
\def\trans{\mathrm{trans}}
\DeclareMathOperator{\gr}{gr}
\def\rPfaff{{\mathrm{rPfaff}}}  
\newcommand{\sube}[1][\e]{{\subseteqq_{#1}}}
\def\so{\raisebox{.5ex}{\scalebox{0.6}{\#}}\kern-.02em{}o}
\def\an{{\mathrm{an}}}
\def\PF{{P_{\cF}}}
\begin{document}

\title{Wilkie's conjecture for Pfaffian structures}

\author{Gal Binyamini}
\address{Weizmann Institute of Science, Rehovot, Israel}
\email{gal.binyamini@weizmann.ac.il}

\author{Dmitry Novikov}
\address{Weizmann Institute of Science, Rehovot, Israel}
\email{dmitry.novikov@weizmann.ac.il}

\author{Benny Zack}
\address{Weizmann Institute of Science, Rehovot, Israel}
\email{binyamin.zackkutuzov@weizmann.ac.il}

\thanks{This research was supported by the ISRAEL SCIENCE FOUNDATION
  (grant No. 1167/17) and by funding received from the MINERVA
  Stiftung with the funds from the BMBF of the Federal Republic of
  Germany. This project has received funding from the European
  Research Council (ERC) under the European Union's Horizon 2020
  research and innovation programme (grant agreement No 802107)}

\subjclass[2010]{Primary 03C64,11U09; Secondary 11G50, 14P10}
\keywords{Yomdin-Gromov lemma, point-counting, Pila-Wilkie
  theorem, o-minimality}

\begin{abstract}
  We prove an effective form of Wilkie's conjecture in the structure
  generated by restricted sub-Pfaffian functions: the number of
  rational points of height $H$ lying in the transcendental part of
  such a set grows no faster than some power of $\log H$. Our bounds
  depend only on the Pfaffian complexity of the sets involved. As a
  corollary we deduce Wilkie's original conjecture for $\R_{\exp}$ in
  full generality.
\end{abstract}
\maketitle

\section{Introduction}

\subsection{Main results}

Let $(\cS,\Omega)$ be a \so-minimal structure admitting sharp cell
decomposition and sharp derivatives (for the definition of these
notions see~\secref{sec:so-minimal}). The structure $\R_\rPfaff$ of
restricted sub-Pfaffian sets (see~\secref{sec:rPfaff}) is an example
of this setup, and the unfamiliar reader may keep this example in mind
in place of the general setting.

If $X\subset\R^n$ then following \cite{pw:thm} we denote by $X^\alg$
the union of all connected, positive-dimensional semialgebraic sets
contains in $X$, and denote $X^\trans:=X\setminus X^\alg$. For
$g,H\in\N$ we denote
\begin{equation}
  X(g,H) := \{x\in X : [\Q(x):\Q]\le g,\ H(x)\le H \}, \qquad X(\Q,H):=X(1,H)
\end{equation}
where $H(\cdot)$ denotes the multiplicative Weil height on $\bar\Q$,
extended to $\bar\Q^n$ as the maximum of the heights of the
coordinates. If unfamiliar see~\secref{sec:asymptotics} for the
asymptotic notation used below.

\begin{Thm}\label{thm:main}
  Let $X\in\Omega_{\cF,D}$. Then
  \begin{equation}
    \# X^\trans(g,H)\le\poly_\cF(D,g,\log H).
  \end{equation}
\end{Thm}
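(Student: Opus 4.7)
The plan is to carry out a Pila--Wilkie style induction on $k=\dim X$, replacing the Yomdin--Gromov reparametrization by the \emph{sharp} parametrization available in $(\cS,\Omega)$, so that the number of charts grows only polynomially in the smoothness parameter $r$ and in the complexity parameters $(\cF,D)$.

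First I would invoke sharp cell decomposition, applied with smoothness $r$ to be chosen, to write $X$ as a union of at most $\poly_\cF(D,r)$ images of charts $\phi_\alpha:(0,1)^{k_\alpha}\to\R^n$ with $k_\alpha\le k$ and $\|\partial^{\valpha}\phi_\alpha\|_\infty\le 1$ for every multi-index $|\valpha|\le r$; this is precisely what the sharp cell decomposition together with the sharp derivatives hypothesis provides. It then suffices to bound the number of algebraic points of degree $\le g$ and height $\le H$ in each $\phi_\alpha((0,1)^{k_\alpha})\cap X^\trans$ separately.

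Next I would apply the Bombieri--Pila--Wilkie determinant method to a single chart $\phi$: subdividing the domain into boxes of diameter $\asymp H^{-1/r}$, the algebraic points of degree $\le g$ and height $\le H$ in the image of each box all lie on a single algebraic hypersurface of degree $d=\poly(k,n,r,g)$. With the choice $r\asymp\log H$ the number of boxes is $O(H^{k/r})=O(1)$, and the degree becomes $d=\poly(g,\log H)$. Taking the union of these hypersurfaces produces a single algebraic $V\subset\R^n$ of degree $\poly(g,\log H)$ containing every algebraic point of $X$ of degree $\le g$ and height $\le H$.

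Finally I would intersect and induct. By the closure properties of $\Omega$, the set $X\cap V$ lies in $\Omega_{\cF,D'}$ with $D'$ polynomially bounded in $D$ and in $\deg V=\poly(g,\log H)$, hence in $D,g,\log H$. A standard dimension-reduction argument, separating the full-dimensional parts of $X\cap V$ (which are swept into $X^\alg$) from the lower-dimensional parts, reduces the count of height-$\le H$ transcendental points to the same type of count on definable sets of dimension $<k$, to which the induction hypothesis applies and yields the claimed $\poly_\cF(D,g,\log H)$ bound; the base case $k=0$ is immediate from the sharp cell count. The main obstacle, and precisely the place where the hypotheses on $(\cS,\Omega)$ are indispensable, is ensuring that the scheme survives with \emph{polynomial} rather than exponential blowup in bookkeeping: both the parametrization step with $r\sim\log H$ and the intersection with a degree-$\poly(\log H)$ hypersurface must be carried out while tracking the complexity parameters $(\cF,D)$ polynomially, which is exactly what ``sharp cell decomposition and sharp derivatives'' are designed to allow.
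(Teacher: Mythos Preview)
Your overall architecture---induction on $\dim X$, parametrize, apply the determinant method to trap $X(g,H)$ in a hypersurface of degree $\poly(g,\log H)$, intersect, separate the full-dimensional pieces into $X^\alg$, and recurse on the rest---is exactly the scheme the paper follows. The genuine gap is in your first paragraph.

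You write that sharp cell decomposition together with sharp derivatives ``is precisely what'' produces $\poly_\cF(D,r)$ charts $\phi_\alpha$ with $\|\partial^\valpha\phi_\alpha\|_\infty\le 1$ for $|\valpha|\le r$. This is not so, and indeed this step is the entire difficulty. Sharp cell decomposition only asserts that $X$ can be cut into $\poly_\cF(D)$ cells each in $\Omega_{O_\cF(1),\poly_\cF(D)}$; it says nothing about bounding the derivatives of the cell maps. Sharp derivatives only asserts that if $f\in\Omega_{\cF,D}$ then the \emph{graph} of $f^{(\alpha)}$ lies in $\Omega_{a_\cF,b_\cF(D,|\alpha|)}$; it does not bound $|f^{(\alpha)}|$. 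Passing from these hypotheses to a $C^r$-parametrization with unit norms and only $\poly_\cF(D,r)$ charts is the content of the paper's Lemma~\ref{lem:alg-lem}, whose proof occupies all of Section~5 and is singled out in the introduction as the main novelty. The paper stresses that getting polynomial growth in $r$ was open even in the semialgebraic case until recently, so it certainly does not drop out of the axioms.

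Two further points worth noting. First, the paper's parametrization lemma does \emph{not} give a full cover of $X$: it only gives an $\e$-cover $\cup_\eta\Im\phi_\eta\sube X$, with the number of charts $\poly_\cF(D,r,|\log\e|)$. This weakening to $\e$-covers (in Yomdin's original spirit rather than Gromov's refinement) is what allows the reparametrizations in the inductive proof to be \emph{affine}, which in turn is what keeps the chart count polynomial in $r$. Correspondingly, Proposition~\ref{prop:interpolation-step} has to be proved for $\phi(I^m)\sube X$ rather than $\phi(I^m)=X$, and $\e$ is chosen with $\log\e=-\poly_m(g,\log H)$. Second, your box-subdivision heuristic with $r\asymp\log H$ is morally right but the paper packages this differently: the choice of $r,d,\e$ is made once in Proposition~\ref{prop:interpolation-step}, and Lemma~\ref{lem:alg-lem} is applied with those parameters so that a \emph{single} hypersurface of degree $\poly_m(g,\log H)$ catches all of $X(g,H)$ in each chart, with no further subdivision needed.
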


This establishes, in the restricted Pfaffian setting, a conjecture by
Pila \cite[Conjecture~1.5]{pila:pfaff}. As an immediate corollary we
obtain the following.

\begin{Cor}[Wilkie's conjecture]\label{cor:wilkie}
  Let $X$ be definable in $\R_{\exp}$. Then
  \begin{equation}
    \# X^\trans(g,H)\le\poly_X(g,\log H).
  \end{equation}
\end{Cor}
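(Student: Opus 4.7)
The plan is to deduce the corollary from Theorem~\ref{thm:main} by showing that every set $X$ definable in $\R_\exp$ lies in $\Omega_{\cF,D}$ for some $\cF, D$ depending only on $X$. Applying the theorem then gives $\# X^\trans(g,H) \le \poly_\cF(D,g,\log H) \le \poly_X(g,\log H)$, which is exactly the desired bound.

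The key observation is that $\exp$ satisfies the global Pfaffian differential equation $y' = y$, so any set definable in $\R_\exp$ is sub-Pfaffian in the unrestricted sense. By Wilkie's model-completeness theorem, such a set is the coordinate projection of the zero locus of finitely many exponential polynomials $f_j(\vx,\vy,e^{y_1},\ldots,e^{y_m})$, with Pfaffian format controlled by the $f_j$ --- hence by $X$ alone. To apply Theorem~\ref{thm:main} we need to upgrade this qualitative picture to a sharp one: cell decompositions and derivative bounds of complexity polynomial in the Pfaffian format.

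The main obstacle is precisely this quantitative upgrade, since Wilkie's original o-minimality proof for $\R_\exp$ is model-theoretic and does not come with complexity bounds. One natural route is to extend Khovanskii's Rolle-theorem-based arguments (originally for restricted Pfaffian functions on compact domains) into the unrestricted setting, tracking format-polynomial bounds through the standard fiber-cutting and cell-decomposition constructions so as to verify the sharp-cell-decomposition and sharp-derivative axioms of~\secref{sec:so-minimal} directly for $\R_\exp$. An alternative route is to work on the cube $[-H,H]^n$ containing all rational points of height $\le H$, rescale to $[-1,1]^n$, and show that the rescaled set lies in a restricted sub-Pfaffian class $\Omega_{\cF,D}$ with $|\cF|, D$ growing only polylogarithmically in $H$; this requires an effective witness bound of the form $|\vy| \le \poly_X(\log H)$ to prevent exponential blow-up in the $e^{y_j}$ terms, followed by an induction-on-dimension argument to transfer the point count from the lifted set to $X$ through the projection (handling separately the fibers on which $\pi$ collapses positive-dimensional semialgebraic sets, which otherwise destroy the $\trans/\alg$ decomposition).

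Once the sharp o-minimality of $\R_\exp$ is secured by either route, the corollary is an immediate application of Theorem~\ref{thm:main}. The hardest step is therefore the quantitative refinement of Wilkie's o-minimality result --- either as a direct sharp cell decomposition in the global Pfaffian setting, or via an effective polylogarithmic witness bound in the Macintyre-Wilkie tradition.
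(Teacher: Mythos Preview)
Your proposal misses the simple trick that makes this a one-paragraph proof, and both of your proposed routes run into genuine obstacles that the paper avoids entirely.

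The paper's argument is this. By Wilkie's theorem of the complement, $X=\pi_n(Y)$ for some $Y\subset\R^N$ quantifier-free in $\R_\exp$, i.e.\ defined by sign conditions on finitely many exponential polynomials. For fixed $g,H$ the set $X(g,H)$ is finite, so one may choose $M$ large enough that every point of $X(g,H)$ has a preimage in $Y_M:=Y\cap[-M,M]^N$; thus $X(g,H)=X_M(g,H)$ where $X_M:=\pi_n(Y_M)$. Now $Y_M$ is restricted semi-Pfaffian, and the key observation is that $Y_M\in\Omega_{\cF,D}$ with $\cF,D$ depending only on $Y$ and \emph{not on $M$}: the Pfaffian chain for $e^x$ on $[-M,M]$ is the single equation $y'=y$ of degree $1$, irrespective of the size of the interval. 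Hence $X_M\in\Omega_{\cF,D}$ with $\cF,D$ depending only on $X$, and Theorem~\ref{thm:main} applied to $X_M$ gives $\#X^\trans(g,H)\le\#X_M^\trans(g,H)\le\poly_\cF(D,g,\log H)=\poly_X(g,\log H)$.

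Your Route~1 (establishing sharp o-minimality directly for the unrestricted $\R_\exp$) would suffice but is not known; the whole point of the paper's reduction is to avoid it. Your Route~2 goes astray where you call for an effective witness bound $|\vy|\le\poly_X(\log H)$: this is essentially the Macintyre--Wilkie effective model-completeness problem, conditional on Schanuel and still open in general. But no such bound is needed, because $M$ may be taken arbitrarily large without changing the Pfaffian format or degree of $Y_M$. Your worry about ``exponential blow-up in the $e^{y_j}$ terms'' is misplaced: the values of the Pfaffian functions are irrelevant to the format and degree, which are governed solely by the degrees of the polynomial ODEs defining the chain. This $M$-independence of the complexity is the entire content of the reduction, and it is what your plan overlooks.
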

\begin{proof}
  By Wilkie's theorem of the complement \cite{wilkie:complements} we
  have $X=\pi_n(Y)$ where $Y\subset\R^N$ is quantifier-free in
  $\R_{\exp}$, and $\pi_n:\R^N\to\R^n$ is the projection to the first
  $n$ coordinates. Let $g,H\in\N$ and choose $M\gg1$ such that
  $X(g,H)=X_M(g,H)$ where
  \begin{equation}
    X_M := \pi_n(Y_M), \quad Y_M:=Y\cap[-M,M]^N.
  \end{equation}
  Now $Y_M$ is restricted semi-Pfaffian, as it is defined by Pfaffian
  functions (exponential polynomials) restricted to
  $[-M,M]^N$. Crucially, $Y_M\in\Omega_{\cF,D}$ where $\cF,D$ depend on
  $Y$ but not on $M$.  Then the same is true for $X_M$, and we
  conclude
  \begin{equation}
    \#X^\trans(g,H)\le\#X_M^\trans(g,H)=\poly_X(g,\log H)
  \end{equation}
  by Theorem~\ref{thm:main}.
\end{proof}

We also have a ``blocks'' generalization of
Theorem~\ref{thm:main}. Recall from \cite{pila:blocks} that a
definable set $B\subset\R^n$ is called a \emph{basic block} if it is
connected and regular, and contained in a connected regular
semialgebraic set of the same dimension (which we call a semialgebraic
closure of $B$, though not this is not uniquely defined). We denote by
$\Omega^\alg$ a filtration making $(\R_\alg,\Omega^\alg)$ into a
\so-minimal structure (one can take, e.g., the filtration from
\cite{bv:rpfaff} for the empty Pfaffian chain).

\begin{Thm}\label{thm:main-blocks}
  Let $X\subset\R^n$ with $X\in\Omega_{\cF,D}$. Then there exists a
  collection $\{B_\eta\subset X\}$ of basic blocks with semialgebraic
  closures $S_\eta$ such that $X(g,H) \subset \cup_\eta B_\eta$ and
  \begin{equation}
    \#\{B_\eta\}=\poly_\cF(D,g,\log H), \qquad \forall\eta: S_\eta\in\Omega^\alg_{O_n(1),\poly_n(g,\log H)}.
  \end{equation}
\end{Thm}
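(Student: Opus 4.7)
The plan is to iterate the parameterization-plus-determinant-method argument underlying Theorem~\ref{thm:main}, at each recursive step replacing $X$ by its intersection with the algebraic vanishing loci produced by the determinant method, and using the accumulated loci to build the semialgebraic closures $S_\eta$.

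I would proceed by induction on $k=\dim X$. In the base case $k=0$, the set $X$ is finite of cardinality $\poly_\cF(D)$ by sharp cell decomposition, and each point is a zero-dimensional basic block serving as its own semialgebraic closure. For $k\ge 1$, I would first invoke the sharp $C^r$-parameterization (available in $(\cS,\Omega)$ under the sharp cell decomposition and sharp derivatives hypotheses) with smoothness parameter $r=O(g+\log H)$, producing $\poly_\cF(D,g,\log H)$ charts $\phi_i\colon(0,1)^k\to X$ whose images cover $X$ and whose $C^r$-norms are bounded by $1$. On each chart I would apply the Bombieri-Pila-Wilkie determinant method to obtain a polynomial $P_i$ of degree $d=\poly(g,\log H)$ vanishing on $\phi_i^{-1}(X(g,H))$. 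Setting $X':=\bigcup_i \phi_i\bigl(\{P_i\circ\phi_i=0\}\cap(0,1)^k\bigr)$ yields a subset of $X$ of dimension strictly less than $k$ that contains $X(g,H)$ and lies in $\Omega_{\cF,D'}$ with $D'=\poly(D,g,\log H)$, since it is cut out of $X$ by finitely many algebraic hypersurfaces of degree $d$. Applying the inductive hypothesis to $X'$, I obtain basic blocks $B_\eta\subset X'\subset X$ together with semialgebraic closures $S_\eta$ covering $X(g,H)=X'(g,H)$, which serve as the desired output for $X$.

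The principal obstacle will be verifying that the complexity blow-up through the recursion remains polynomial: each step multiplies $D$ by a factor $\poly(g,\log H)$ and the block count by a factor $\poly_\cF(D,g,\log H)$, so after at most $n$ iterations both the number of blocks and the algebraic degrees entering the final $S_\eta$ must stay polynomial in $(g,\log H)$ with exponents depending only on $n$. A secondary technical point is that each final $S_\eta$ arises as a connected regular component of the intersection of $O_n(1)$ polynomial equations of degree $\poly_n(g,\log H)$ accumulated along the recursion; ensuring this component belongs to $\Omega^\alg_{O_n(1),\poly_n(g,\log H)}$ comes down to a standard bound on the number and complexity of connected components of real algebraic sets, applied within $(\R_\alg,\Omega^\alg)$.
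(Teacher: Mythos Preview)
Your recursion has a genuine gap: the claim that $\dim X'<k$ is false in general. The determinant method gives a polynomial $P_i$ (in the ambient coordinates, not on $(0,1)^k$ as you wrote) vanishing on the rational points in the chart image, but nothing prevents $P_i$ from vanishing identically on $\phi_i((0,1)^k)$. This happens precisely when the chart image lies in an algebraic hypersurface, i.e.\ when it sits inside $X^\alg$. In that case $\phi_i(\{P_i\circ\phi_i=0\})=\phi_i((0,1)^k)$ has full dimension $k$, the recursion does not progress, and you never output a positive-dimensional block. Since your scheme only emits blocks at the zero-dimensional base case, it is in effect attempting to show $\#X(g,H)\le\poly_\cF(D,g,\log H)$, which is simply false when $X^\alg\neq\emptyset$ (take $X=\{y=x^2\}$).

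The paper's proof addresses exactly this. After parametrizing, it applies the interpolation step to every projection onto $m{+}1$ coordinates, obtaining polynomials $P_J$ whose common zero locus is an \emph{algebraic} variety $V\subset\R^n$ of dimension at most $m$ containing $X(g,H)$. One then stratifies $V$: the top-dimensional (regular, connected, semialgebraic) strata $S_i$ serve as the semialgebraic closures, and the top-dimensional strata $B_{ij}$ of $X\cap S_i$ are the basic blocks output at this level of the induction. Only the lower-dimensional leftovers $S'\cap X$ and $B'$ are passed to the inductive step; these genuinely have dimension $<m$. Thus the blocks are harvested at every stage, not just at the bottom, and the full-dimensional algebraic pieces are exactly what get packaged into blocks rather than recursed on. Two smaller points: the parametrization in this paper yields only an $\e$-cover, so the interpolation proposition must tolerate an $\e$-perturbation; and the polynomials live in the ambient $\R^n$ (or projections thereof), so your phrase ``$P_i$ vanishing on $\phi_i^{-1}(X(g,H))$'' does not type-check.
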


Theorem~\ref{thm:main-blocks} clearly implies Theorem~\ref{thm:main},
since the positive-dimensional basic blocks $B_\eta$ are subsets of
$X^\alg$ by definition.

\subsection{A $C^r$-parametrization lemma}
\label{sec:intro-Cr}

For a $C^r$-smooth function $f:U\to\R$ on a domain $U\subset\R^m$ we
denote
\begin{align}
  \norm{f} &:= \sup_{x\in U} |f(x)|, &  \norm{f}_r &:= \max_{|\valpha|\le r} \norm{D^\valpha f}.
\end{align}
For $F:U\to\R^n$ we set $\norm{F}=\max_{i}\norm{F_i}$ and similarly
for $\norm{F}_r$. For a set $A\subset\R^n$ we write $U_\e(A)$ for the
$\e$-neighborhood of $A$ with the $\ell_\infty$-metric.  For
$A,B\subset\R^n$, we write $A\sube B$ to mean that $A\subset B$ and
$B\subset U_\e(A)$. We say that $A$ is an \emph{$\e$-cover} of $B$.

The main novelty of our approach is the following version of Yomdin's
algebraic lemma. Let $I:=(0,1)$.
\begin{Lem}\label{lem:alg-lem}
  Let $r\in\N$ and $\e>0$.  Let $X\subset[0,1]^n$ be of dimension
  $\mu$ with $X\in\Omega_{\cF,D}$. Then there exists a collection
  $\{\phi_\eta:I^\mu\to X\}$ such that
  $\norm{\phi_\eta}_r\le 1$ and $\cup_\eta \Im\phi_\eta\sube X$,
  and
  \begin{equation}
    \#\{\phi_\eta\} \le \poly_\cF(D,r,|\log\e|), \qquad \forall\eta: \phi_\eta\in\Omega_{O_\cF(1),\poly_\cF(D,r)}.
  \end{equation}
\end{Lem}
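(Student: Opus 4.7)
The plan is to argue by induction on $\mu=\dim X$. The base case $\mu=0$ is immediate: sharp o-minimality bounds $\#X\le\poly_\cF(D)$, and each point gives a constant map of trivial complexity. For the inductive step I would first apply sharp cell decomposition to split $X$ into $\poly_\cF(D)$ cells of complexity $\poly_\cF(D)$; lower-dimensional cells are handled by the inductive hypothesis. The remaining task is to parametrize one top-dimensional cell $C$, which after permuting coordinates is the graph of a definable $C^r$-smooth map $f:C'\to I^{n-\mu}$ over an open cell $C'\subset I^\mu$. Sharp derivatives ensures each partial $D^\valpha f$ with $|\valpha|\le r$ remains definable of complexity $\poly_\cF(D,r)$.

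The crux is a scale-refinement of $C'$ into pieces on which $f$ rescales to $C^r$-norm $\le 1$. For each $k=0,\dots,K:=\lceil\log_2(1/\e)\rceil$ I would introduce the bad set
\begin{equation}
  B_k:=\bigl\{x\in C':|D^\valpha f(x)|>2^{k|\valpha|}\text{ for some }1\le|\valpha|\le r\bigr\},
\end{equation}
so that $B_0\supset B_1\supset\cdots$, with each $B_k\in\Omega_{O_\cF(1),\poly_\cF(D,r)}$ \emph{uniformly in $k$}: the thresholds $2^{k|\valpha|}$ enter only as real-number parameters and do not inflate the $\Omega$-filtration. On the annulus $A_k:=B_{k-1}\setminus B_k$ one has $|D^\valpha f|\le 2^{k|\valpha|}$. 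Covering $A_k$ by dyadic cubes of side $2^{-k}$ and intersecting with the cells provided by sharp cell decomposition of $A_k$, I obtain $\poly_\cF(D,r)$ pieces at scale $k$, each admitting an affine rescaling $\psi:I^\mu\to(\text{piece})$ for which $\|f\circ\psi\|_r\le 1$. Setting $\phi_\eta:=(\mathrm{graph}\,f)\circ\psi$ yields a $C^r$-map with $\|\phi_\eta\|_r\le 1$ of complexity $\poly_\cF(D,r)$. The residual set $B_K$ has sidelength $<\e$ and is $\e$-covered by $\poly_\cF(D,r)$ constant maps.

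The main obstacle is securing the uniform-in-$k$ bound of $\poly_\cF(D,r)$ pieces per scale. Naively, the dyadic grid at scale $k$ contains up to $2^{\mu k}$ cubes meeting $C'$; only those touching $A_k$ are relevant, and controlling their number requires that sharp cell decomposition of $A_k$ yields complexity independent of $k$. This is precisely where the strength of \emph{sharp} o-minimality over ordinary o-minimality is decisive, and is the step where the precise formalization of sharp cell decomposition and sharp derivatives must be invoked carefully---in particular their stability under parametric thresholding. Once this uniform count is in hand, summing $\poly_\cF(D,r)$ pieces across the $K=O(|\log\e|)$ scales yields the total $\poly_\cF(D,r,|\log\e|)$, and the union of images $\e$-covers $X$ by construction, completing the induction.
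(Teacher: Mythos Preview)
Your dyadic-scale scheme has a genuine gap at precisely the step you flag as the main obstacle, and it cannot be repaired by sharp cell decomposition alone. Sharp cell decomposition of $A_k$ yields $\poly_\cF(D,r)$ \emph{cells}, but these cells are not cubes of side $2^{-k}$: they can have arbitrary aspect ratio, and an affine map $\psi:I^\mu\to(\text{cell})$ will not in general bring $\|f\circ\psi\|_r$ down to~$1$. If instead you insist on honest dyadic cubes, their number is governed by the \emph{geometry} of $A_k$, not its complexity. A concrete failure: take $\mu=2$, $X\subset[0,1]^3$ the graph of $f(x,y)=\sqrt{x}$ over $(0,1)^2$. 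For $r=1$ one computes $B_k=\{x<4^{-(k+1)}\}\times(0,1)$, so each $A_k$ is a strip of width $\sim 4^{-k}$ and full height~$1$. Covering $A_k$ by cubes of side $2^{-k}$ requires $\sim 2^k$ of them, and summing over $k\le K=\lceil\log_2(1/\e)\rceil$ gives $\sim 1/\e$ pieces rather than $\poly(|\log\e|)$. The same example refutes your treatment of the residual set: $B_K$ is a strip of height~$1$, so it is not $\e$-covered by $\poly_\cF(D,r)$ constant maps.

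The paper's argument avoids this by a different induction---on the number of source variables in a parametric statement about maps $F:\Lambda\times I^n\to I^k$ (Lemma~\ref{lem:F-lem})---rather than on $\dim X$. The base case $n=1$ rests on a monotonicity lemma (Lemma~\ref{lem:discrete-Cr}): once the signs of $f',\ldots,f^{(r+1)}$ are fixed on an interval, $|f^{(j)}(x)|<M^j$ holds automatically at distance $>j/M$ from the endpoints, giving $\poly(r,|\log\e|)$ \emph{affine} charts with no dyadic bookkeeping. In the inductive step one first bounds the $y$-derivatives via the inductive hypothesis, then controls $x$-derivatives by selecting, for each $(i,\alpha)$, a definable curve $y=\gamma_{i,\alpha}(x)$ along which $|F_i^{(\alpha)}|$ is within a factor~$2$ of its fibrewise supremum; applying the one-dimensional case (with its crucially \emph{affine} reparametrizations, needed for~\eqref{eq:Gamma-i-alpha-cond}) to all these curves simultaneously then bounds every $F_i^{(\alpha)}$ on the whole cube. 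The anisotropy that defeats your isotropic cube covering is absorbed by this one-variable-at-a-time reparametrization.
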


This formulation is similar in spirit to Yomdin's original formulation
\cite{yomdin:lemma,yomdin:lemma-addendum}. Gromov \cite{gromov:gy}
later refined Yomdin's work by showing that we can avoid $\e$-covers
altogether and cover the set $X$ completely. However, as we will see,
Lemma~\ref{lem:alg-lem} is sufficient for the applications in
Diophantine geometry (as it was for Yomdin's original application in
dynamics). The weaker formulation with $\e$-covers enables us (as was
already the case in Yomdin's original work) to restrict to
\emph{affine} reparametrizations at some crucial moments, where
Gromov's approach involves nonlinear terms. The linearity of the
reparametrizing maps turns out to allow for crucial technical
simplifications related to achieving polynomial growth of
$\#\{\phi_\eta\}$ as a function of $r$ (specifically
in~\secref{sec:higher-der})

\subsection{Background}

\subsubsection{The Pila-Wilkie theorem}
The origin of the area of point-counting in tame geometry can be
traced to the work of Bombieri and Pila \cite{bp,pila:Q-count}. In
these papers it was shown that if $\Gamma\subset\R^2$ is a compact
analytic curve containing no semialgebraic curves then for every
$\e>0$ one has $\#\Gamma(\Q,H)=O_{\Gamma,\e} (H^\e)$. After some work
by Pila on subanalytic surfaces
\cite{pila:subanalytic-integer,pila:subanalytic-rational}, this result
was generalized into its canonical form by Pila and Wilkie
\cite{pw:thm}, who proved that the bound
$\#X^\trans(\Q,H)<C(X,\e)\cdot H^\e$ holds for \emph{any} $X$
definable in an o-minimal structure. This result has had a profound
impact on arithmetic geometry, and we refer the reader to
\cite{scnalon:survey} for a survey.

For Pfaffian surfaces, Jones and Thomas \cite{jt:effective-pw-surface}
established an effective form of the Pila-Wilkie theorem. In the
general restricted sub-Pfaffian setting, a recent paper by the first
author with Jones, Schmidt and Thomas \cite{bjst:effective-pw}
establishes an effective form of the Pila-Wilkie theorem: if
$X\in\Omega_{\cF,D}$ then one can take
$C(X,\e)=\poly_{\cF,\e}(D)$. Many of the technical methods for
using \so-minimality in our context are inspired by this prior
work. Indeed the paper \cite{bv:rpfaff} which inspired the notion of
\so-minimality grew out of an attempt to provide a suitable foundation
for the results in \cite{bjst:effective-pw}.

\subsubsection{The Wilkie conjecture}
Examples by Pila \cite[Example~7.5]{pila:subanalytic-integer} show
that in $\R_\an$ the Pila-Wilkie asymptotic is essentially
optimal. However, such examples involve ``hand-crafted'' functions and
no ``natural'' example exhibiting this behavior is known. Wilkie made his
conjecture (now Corollary~\ref{cor:wilkie}) in the original paper
\cite{pw:thm} as a concrete formulation of this phenomenon. The case
of Pfaffian curves was proved by Pila \cite{pila:pfaff}, and our
approach in the one-dimensional case is indeed somewhat similar to
Pila's approach. Some further examples of surfaces were treated in
\cite{pila:exp-surface}, but general surfaces already seem difficult
to treat with this approach.

The key obstacle to progress on Wilkie's conjecture has been to
establish a $C^r$-parametrization lemma with polynomial bounds for the
number of charts, as a function of the complexity of the set and the
smoothness order $r$. This problem was open even in the semialgebraic
case, and was recently resolved in \cite{me:complex-cells} using
complex analytic methods (see also
\cite{cpw:params,vhille:mild,vhille:power-subanalytic} for a result on
polynomial growth in $r$, without complexity bounds in some o-minimal
structures). The problem remains open beyond the semialgebraic case,
and the complex analytic methods seem unlikely to directly carry over
to the unrestricted exponential case. For general definable sets, the
only previously known case of the Wilkie conjecture is
\cite{me:rest-wilkie} by the first two authors. This paper established
Wilkie's conjecture for the structure $\R^{\mathrm{RE}}$ generated by
the exponential and sine functions \emph{restricted} to compact
domains. The proofs were based on an approach avoiding smooth
parametrizations altogether, replacing it by complex-geometric
ideas. This is only applicable for \emph{holomorphic-Pfaffian}
functions, i.e. holomorphic functions whose graph, viewed as a real
set, is Pfaffian in the real sense. By comparison, our approach here
applies to arbitrary restricted sub-Pfaffian functions without
requiring that the complex-analytic continuation is again Pfaffian.
The complex-geometric ideas also seem much more difficult to carry out
in the presence of unrestricted exponentials.

\begin{Rem}
  The proof of Corollary~\ref{cor:wilkie} would not be applicable with
  the methods of \cite{me:rest-wilkie} because the complex analytic
  nature of these methods would require us to consider
  $e^{z_1},\ldots,e^{z_M}$ restricted to large \emph{complex
    polydiscs} $D(0,M)^N$ rather than large cubes $[-M,M]^N$ as we do
  here. However, while the Pfaffian complexity of $e^x$ on $[-M,M]$ is
  bounded independently of $M$, the Pfaffian complexity of $e^z$ on
  $D(0,M)$ is roughly $M$ as evidenced by the fact that the Pfaffian
  equation $e^z=1$ admits roughly $M/\pi$ solutions in $D(0,M)$.
\end{Rem}

\subsubsection{Unrestricted exponentials in arithmetic applications}

Unrestricted exponentials are used in many of the most spectacular
applications of the Pila-Wilkie theorem, where they arise in
uniformizing maps of arithmetic quotients around cusps. Extending the
more advanced counting techniques to this case is therefore
potentially very useful. In particular, a recent paper by the first
author \cite{me:qfol-geometry} establishes a polylogarithmic counting
result in the spirit of Wilkie's conjecture for sets defined using
algebraic foliations (not necessarily Pfaffian) over number
fields. This result has played an important role in recent progress on
the Andr\'e-Oort conjecture for general Shimura varieties.  It was
used by the first author, Schmidt and Yafaev \cite{bsy:ao} to
establish Galois lower bounds for special points conditional on
certain height bounds. These height bounds were subsequently proved by
Pila, Shankar and Tsimerman thus finishing the proof of Andr\'e-Oort
in general.

The approach of \cite{me:qfol-geometry} is based on the complex
geometric ideas of \cite{me:rest-wilkie} and suffers from the same
limitation to restricted analytic situations, and this leads to
technical complications in \cite{bsy:ao} and in further potential
applications of this result in arithmetic geometry. It seems plausible
that the new approach developed in the present paper could also lead
to progress on unrestricted exponentials in this non-Pfaffian
situation, and we have formulated our results in the more general
\so-minimal context with this in mind.

\subsection{Asymptotic notation}
\label{sec:asymptotics}

In this paper each appearance of an expression $Z=O_X(Y)$ should be
interpreted as shorthand notation for $Z\le C_X\cdot Y$ where
$X\to C_X$ is a universally fixed, positive valued real
function. Similarly we write $Z=\poly_X(Y)$ as shorthand for
$Z\le P_X(Y)$ where $X\to P_X$ is a universally fixed mapping and
$P_X$ is a polynomial with positive coefficients. However we suppress
dependence of the constants on $(\cS,\Omega)$, which we consider to be
universally fixed throughout the text.

\subsection{Acknowledgments}

It is our pleasure to thank Yosi Yomdin for insightful discussions on
the algebraic lemma, and Alex Wilkie for alerting us of the potential
relevance of his notes \cite{wilkie:notes}. In these notes Wilkie
introduces a method for obtaining $C^r$-parametrizations in the
one-dimensional case with a single reparametrization, rather than by
the more traditional induction on $r$. While we did not eventually use
this directly in our text, our approach is a kind of discrete version
of this idea (so that we can use linear reparametrizations similar to
Yomdin's approach) and certainly inspired by it. Pila
\cite{pila:pfaff} has used a similar approach earlier for Pfaffian
curves, and his idea also inspires our approach. We note further that
the interpolation method that we use to control $X(g,H)$ efficiently
as a function of $g$ was also introduced in Wilkie's important notes
\cite{wilkie:notes}.

\section{Sharply o-minimal structures}
\label{sec:so-minimal}

\subsection{\so-minimal structures}
\label{sec:so-minimality}

In this section we introduce the notion of a \emph{sharply o-minimal
  structure} (abbreviated \emph{\so-minimal}). To start, a
\emph{format-degree} filtration (abbreviated \emph{FD-}filtration) on
a structure $\cS$ is a collection
$\Omega=\{\Omega_{\cF,D}\}_{\cF,D\in\N}$ such that each
$\Omega_{\cF,D}$ is a collection of definable sets (possibly of
different ambient dimensions), with
$\Omega_{\cF,D} \subset \Omega_{\cF+1,D}\cap \Omega_{\cF,D+1}$ and
$\cup_{\cF,D}\Omega_{\cF,D}$ is the collection of all definable sets
in $\cS$. We call the sets in $\Omega_{\cF,D}$ sets of \emph{format}
$\cF$ and \emph{degree} D. We will assume $\Omega_{\cF,D}$ only
contains subsets of $\R^n$ for $n\le\cF$.

A \so-minimal structure is a pair $\Sigma:=(\cS,\Omega)$
consisting of an o-minimal expansion of the real field $\cS$ and an
FD-filtration $\Omega$; and for each $\cF\in\N$ a
polynomial $P_\cF(\cdot)$ such that the following holds:
\begin{enumerate}
\item If $A\in \Omega_{\cF,D}$ with $A\subset\R^n$ then
  $A^c,\pi_{n-1}(A),A\times\R$ and $\R\times A$ lie in
  $\Omega_{\cF+1,D}$.
\item If $A_1,\ldots,A_k\subset\R^n$ with $A_j\in \Omega_{\cF,D_j}$
  then $\cup_{i}A_i\in \Omega_{\cF,D}$ and
  $\cap_{i}A_i\in \Omega_{\cF+1,D}$ where $D=\sum_j D_j$.
\item\label{axiom:hypersurface} If $P\in\R[x_1,\ldots,x_n]$ then $\{P=0\}\in \Omega_{n,\deg P}$.
\item If $A\in\Omega_{\cF,D}$ with $A\subset\R$ then it has at most
  $\PF(D)$ connected components,
\end{enumerate}
Axioms 1-2 bear a close analogy to the standard axioms of a
first-order structure, keeping track of the formats and degrees of
sets defined using the logical operations. Axiom 3 ensures
compatibility with the standard notion of degree in the
\mbox{(semi-)}algebraic case. Finally Axiom 4 replaces the mere finiteness
postulated in standard o-minimality by polynomial bounds in degrees.

\subsection{Sharp cell decomposition}

The following notion is crucial for working with \so-minimal
structures.

\begin{Def}
  We say that $(\cS,\Omega)$ has \emph{sharp cell decomposition} if
  for every $\cF\in\N$ there are
  \begin{align}
    a_\cF&\in\N, &  b_\cF&\in\N[D,k], & c_\cF&\in\N[D]
  \end{align}
  such that the following holds. For every $\cF,D\in\N$ and every
  $X_1,\ldots,X_k\in\Omega_{\cF,D}$ subsets of $\R^n$, there exists a
  cylindrical decomposition $\{C_\eta\}$ of $\R^n$ compatible with
  $X_1,\ldots,X_k$ such that
  \begin{equation}
    \#\{C_\eta\} \le b_\cF(D,k), \qquad \forall\eta: C_\eta\in\Omega_{a_\cF,c_\cF(D)}.
  \end{equation}
\end{Def}

We use the following notation for cells from
\cite{me:complex-cells}. For $C\subset\R^{n-1}$ and $a,b:C\to\R$ we
set
\begin{align}
  \begin{aligned}
    C\odot\{a(z)\} &:= \{ (z,w) : z\in C,\ w=a(z)\}, \\
    C\odot(a(z),b(z)) &:= \{ (z,w) : z\in C,\ a(z)<w<b(z) \}.
  \end{aligned}
\end{align}
We will also allow $a(z)\equiv-\infty$ and $b(z)\equiv\infty$ in the
second case above.

In an upcoming paper we prove, based on ideas from \cite{bv:rpfaff},
that for every \so-minimal structure $(\cS,\Omega)$ there is another
FD-filtration $\Omega^*$ with $\Omega_{\cF,D}\subset\Omega^*_{\cF,D}$
for every $\cF,D$ such that $(\cS,\Omega^*)$ is \so-minimal with sharp
cell decomposition. This implies that Theorem~\ref{thm:main-blocks}
and its consequences actually apply without explicitly assuming that
$(\cS,\Omega)$ has sharp cell decomposition. However to keep matters
clear and avoid dependence on our upcoming text we keep this as an
extra condition. Our main example $\R_\rPfaff$ does, in any case,
admit sharp cell decomposition as explained in~\secref{sec:rPfaff}.

\subsection{Some consequences of \so-minimality and sharp cell decomposition}

The axioms of \so-minimality imply that whenever
$X_1,\ldots,X_k\in\Omega_{\cF,D}$ and $\psi$ is a first-order formula
of depth $\ell$ with basic predicates $\vx\in X_j$ then the set $X$
defined by $\phi$ satisfies
\begin{equation}
  X\in\Omega_{O_{\cF,\ell}(1),\poly_{\cF,\ell}(D,k)},
\end{equation}
see \cite[Section~1.3]{bv:rpfaff} for a more precise treatment. Together
with sharp cell decomposition, this can be used to effectivize many of
the classical constructions of o-minimality in a rather routine
fashion. We record a few instances used in our text to familiarize the
reader with this technique.

\begin{Prop}[Connected components]
  Let $X\in\Omega_{\cF,D}$. Then each connected component of $X$ is in
  $\Omega_{O_\cF(1),\poly_\cF(D)}$, and their number is
  $\poly_{\cF}(D)$.
\end{Prop}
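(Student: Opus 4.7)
The plan is to obtain both claims simultaneously by applying the sharp cell decomposition axiom to $X$ itself. Taking $k=1$ and $X_1:=X$ in the definition, I would produce a cylindrical decomposition $\{C_\eta\}$ of $\R^n$ compatible with $X$, satisfying $\#\{C_\eta\}\le b_\cF(D,1)=\poly_\cF(D)$ and $C_\eta\in\Omega_{a_\cF,c_\cF(D)}=\Omega_{O_\cF(1),\poly_\cF(D)}$ for each $\eta$. Let $\mathcal{E}$ denote the subfamily of cells contained in $X$; by compatibility these partition $X$, and $\#\mathcal{E}\le\poly_\cF(D)$.

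Next, since cells of a cylindrical decomposition are connected by definition, each $C_\eta\in\mathcal{E}$ must lie inside a single connected component of $X$. It follows that every connected component of $X$ is the union of some subcollection of $\mathcal{E}$, which immediately bounds the number of components by $\#\mathcal{E}\le\poly_\cF(D)$. To bound the complexity of a single component $Y$, I would simply express $Y$ as the union of the (at most $\poly_\cF(D)$) cells it contains and invoke Axiom~2 of \so-minimality: the union of $N=\poly_\cF(D)$ sets of format $a_\cF$ and degree $c_\cF(D)$ lies in $\Omega_{a_\cF,\,N\cdot c_\cF(D)}=\Omega_{O_\cF(1),\poly_\cF(D)}$.

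There is no serious obstacle here: the substance of the argument has been entirely absorbed into the sharp cell decomposition axiom. The only point requiring brief verification is the folklore fact that when a set is partitioned by finitely many connected definable pieces, its connected components are precisely unions of those pieces --- which follows immediately from the observation that each connected piece lies in exactly one connected component of the ambient set.
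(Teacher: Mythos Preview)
Your proof is correct and follows exactly the paper's approach: the paper's own proof is the one-line sketch ``Perform a cell decomposition. Each connected component is a union of cells,'' and you have simply spelled out the details (counting cells, invoking Axiom~2 for the union) that the paper leaves implicit.
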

\begin{proof}
  Perform a cell decomposition. Each connected component is a union of
  cells.
\end{proof}

\begin{Prop}[Stratification]
  Let $X\in\Omega_{\cF,D}$. Then $X$ is a disjoint union
  $\cup_\eta S_\eta$ where each $S_\eta$ is connected and regular, and
  \begin{equation}
    \#\{S_\eta\} = \poly_\cF(D), \qquad \forall\eta: S_\eta\in\Omega_{O_\cF(1),\poly_\cF(D)}.
  \end{equation}
\end{Prop}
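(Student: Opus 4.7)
The plan is to obtain the stratification as an almost immediate consequence of sharp cell decomposition, refined only to secure smoothness.

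First I would invoke sharp cell decomposition with $k=1$ and $X_1=X$, producing a cylindrical decomposition $\{C_\eta\}$ of $\R^n$ compatible with $X$, with $\#\{C_\eta\}\le b_\cF(D,1)=\poly_\cF(D)$ and each $C_\eta\in\Omega_{a_\cF,c_\cF(D)}\subset\Omega_{O_\cF(1),\poly_\cF(D)}$. Discarding the cells not contained in $X$ yields a disjoint decomposition $X=\bigsqcup_\eta S_\eta$ of the correct cardinality and complexity. Connectedness of each $S_\eta$ is automatic from the inductive definition of cylindrical cells: a cell is either the graph of a continuous function over a connected lower-dimensional cell, or a band between two such graphs over a connected base, and both cases are connected by induction on the ambient dimension.

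Second, to upgrade each cell to a regular submanifold I would refine the decomposition so that the continuous functions defining the cells become $C^1$-smooth on their bases. This is exactly the role of the \emph{sharp derivatives} hypothesis on $(\cS,\Omega)$: the loci where these defining functions fail to be differentiable, or where their partial derivatives fail to be continuous, are definable from $X$ by first-order formulas of bounded depth. By the effectivity principle recorded earlier in the excerpt (see the paragraph preceding the Connected Components proposition), these loci lie in $\Omega_{O_\cF(1),\poly_\cF(D)}$. Feeding them, together with $X$, back into sharp cell decomposition produces a refinement whose cells are $C^1$-submanifolds; the format inflates by an $O_\cF(1)$ constant and the degree by a $\poly_\cF(D)$ factor, so the bounds in the statement persist.

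The main obstacle is the regularity step. Once the sharp derivatives hypothesis is taken as a black box, the argument is structurally identical to the proof of the Connected Components proposition immediately above. The subtle point to verify is that the ``non-$C^1$'' loci of the defining functions of cylindrical cells really are cut out by formulas of depth $O_\cF(1)$ in the basic predicates defining the $C_\eta$; this is what lets the first-order effectivity principle produce the $\poly_\cF(D)$ control on degrees after the refinement, and without it one would only get uncontrolled iteration of cell decomposition.
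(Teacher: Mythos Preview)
Your route is different from the paper's. The paper does not refine to a $C^1$-cell decomposition at all; instead it inducts on $\mu:=\dim X$. It defines the $\mu$-regular part $S\subset X$ directly by an $\e$-$\delta$ first-order formula (``there is a coordinate projection $L:\R^n\to\R^\mu$ such that $L\rest X$ is locally invertible near $p$ with $C^1$ inverse of full rank''), so $S\in\Omega_{O_\cF(1),\poly_\cF(D)}$ by the \so-minimal axioms alone---sharp derivatives is never invoked. The connected components of $S$ (via the previous proposition) are the top strata, and one recurses on the lower-dimensional $X\setminus S$. This is shorter and uses strictly less structure than your sketch; in return, your approach would yield a \emph{cylindrical} stratification, which is occasionally convenient.

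Your approach can be made to work, but the claim that a single refinement compatible with the non-$C^1$ loci of the first decomposition already yields $C^1$-cells is not correct as stated: the refined decomposition has its \emph{own} defining functions at each ambient level, and those introduced in $\R^{n-1},\R^{n-2},\ldots$ by the refinement need not be $C^1$ on their new bases. The standard fix is the usual induction on the ambient dimension $n$ (as in van den Dries), which here has depth $n\le\cF=O_\cF(1)$ and so still gives the $\poly_\cF(D)$ bounds---but this is precisely the ``uncontrolled iteration'' you were worried about, tamed only because $n$ is bounded by the format. Also, invoking sharp derivatives is unnecessary: the locus where a definable function fails to be $C^1$ is cut out by an $\e$-$\delta$ formula of bounded depth, and even if you name the first partials explicitly, their complexity is controlled in any \so-minimal structure (the ``sharp'' in sharp derivatives concerns uniformity in the order $|\alpha|$, which is irrelevant for $|\alpha|=1$).
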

\begin{proof}
  Let $\mu:=\dim X$. Let $S\subset X$ be the $\mu$-regular part of
  $X$, i.e. the set of points $p\in X$ such that for some linear
  projection $L:\R^n\to\R^\mu$, the map $L\rest{X}$ is locally
  invertible at $p$, and the inverse
  $L':(\R^\mu,L(p))\to X\subset\R^n$ is locally $C^1$ with Jacobian of
  rank $\mu$. This can be written out explicitly as a first-order
  formula in $\e$-$\delta$-type language, so the axioms of
  \so-minimality give $S\in\Omega_{O_\cF(1),\poly_\cF(D)}$. Each
  connected component of $S$ is a top-dimensional stratum, and the
  remaining set $X\setminus S$ can be handled by induction on $\mu$.
\end{proof}

\begin{Prop}[Definable choice]
  Let $X\subset\Lambda\times\R^n$ with $X\in\Omega_{\cF,D}$, and
  suppose $X_\lambda\neq\emptyset$ for every $\lambda\in\Lambda$. Then
  there is a map $F:\Lambda\to\R^n$ with
  $\gr F\in\Omega_{O_\cF(1),\poly_\cF(D)}$ such that $\gr F\subset X$.
\end{Prop}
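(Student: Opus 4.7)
The plan is to induct on $n$, using sharp cell decomposition as the main tool. Since $\Omega_{\cF,D}$ contains only subsets of $\R^{\le\cF}$, we have $n\le\cF$, so the recursion has depth at most $\cF$ and a bounded number of polynomial blowups in $D$ still yields $\poly_\cF(D)$.

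For the base case $n=1$, I would apply sharp cell decomposition to $X$ (together with $\Lambda\times\R$) to produce cylindrical cells $\{C_\eta\}$ compatible with both, with $\#\{C_\eta\}=\poly_\cF(D)$ and each $C_\eta\in\Omega_{O_\cF(1),\poly_\cF(D)}$. Above each base cell $C$ (in the $\Lambda$-coordinates), the cells are alternately graphs $C\odot\{a(z)\}$ and bands $C\odot(a(z),b(z))$, and since $X_\lambda\neq\emptyset$ there is a lowest such cell contained in $X$. Define $F$ on $C$ by taking $a(\lambda)$ in the graph case, the midpoint in the bounded-band case, $a(\lambda)+1$ or $b(\lambda)-1$ in the half-infinite cases, and $0$ when both endpoints are infinite. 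Each piece of $\gr F$ over a base cell is a bounded-depth first-order construction in $X$ and the $C_\eta$, so lies in $\Omega_{O_\cF(1),\poly_\cF(D)}$; the total graph is the union of $\poly_\cF(D)$ such pieces, which preserves the bounds by axiom~2.

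For the inductive step, let $X':=\pi(X)\subset\Lambda\times\R^{n-1}$ where $\pi$ drops the last coordinate; then $X'\in\Omega_{\cF+1,D}$ has nonempty fibers, and the inductive hypothesis supplies $F':\Lambda\to\R^{n-1}$ with $\gr F'\in\Omega_{O_\cF(1),\poly_\cF(D)}$. The set $Y:=\{(\lambda,y):(\lambda,F'(\lambda),y)\in X\}$ is a bounded-depth first-order construction from $X$ and $\gr F'$, so it lies in $\Omega_{O_\cF(1),\poly_\cF(D)}$ and has nonempty fibers. Applying the base case to $Y$ produces $F'':\Lambda\to\R$, and $F:=(F',F'')$ is the desired section.

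The subtlest ingredient is the uniform first-order description of the canonical choice in the base case: the formula must have depth bounded independently of $\#\{C_\eta\}$. This is handled by defining $F$ separately on each base cell (so the per-piece depth is an absolute constant) and then assembling $\gr F$ as a union over the $\poly_\cF(D)$ cells, which costs only a polynomial factor in the degree rather than a blowup in the format.
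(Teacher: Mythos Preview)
Your proposal is correct and follows essentially the same strategy as the paper: induct on $n$, use sharp cell decomposition to reduce to a single cell over each base piece, and take the midpoint (or a shifted endpoint in the unbounded cases). The paper organizes the argument slightly differently---it performs one cell decomposition of $\Lambda\times\R^n$ at the outset, reduces to $X$ being a single cell $C\odot(a,b)$, and then recurses directly on the base $C\subset\Lambda\times\R^{n-1}$---whereas you separate out an explicit $n=1$ base case and handle the inductive step by projecting to $\Lambda\times\R^{n-1}$ and pulling back; but these are equivalent packagings of the same idea, and your care about bounding the format blowup by the recursion depth $n\le\cF$ is exactly the right observation.
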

\begin{proof}
  Perform a cylindrical decomposition of $\Lambda\times\R^n$
  compatible with $X$. In particular we obtain a cylindrical
  decomposition $\{C_\eta\}$ of $\Lambda$, and over each $C_\eta$ a
  cylindrical decomposition of $C_\eta\times\R^n$ by cells projecting
  to $C_\eta$. It will be enough to handle each $C_\eta$ separately
  and then take the unions of the corresponding graphs. Moreover, we
  may as well consider just one of the cells over $C_\eta$ that is
  contained in $X$ for the purpose of defining the choice function. So
  assume without loss of generality that $X$ is a cell.

  Write $X=C\odot(a(z),b(z))$ where
  $C\in\Omega_{O_\cF(1),\poly_\cF(D)}$ is a cell in
  $\Lambda\times\R^{n-1}$ and $a(z),b(z):C\to\R$. The cases
  $a(z)=-\infty$, $b(z)=\infty$ and $C\odot\{a(z)\}$ are treated
  similarly. We have
  $\gr a(z),\gr b(z)\in\Omega_{O_\cF(1),\poly_\cF(D)}$ since they can
  be defined using first-order formulas as the infimum and supremum of
  the fiber $C_z$. Then we find a choice function $\hat F(\lambda)$ on
  $C$ by induction on $n$, and
  \begin{equation}
    F(\lambda):=\bigg(\hat F(\lambda),\frac{a(\lambda,\hat F(\lambda))+b(\lambda,\hat F(\lambda))}{2}\bigg)
  \end{equation}
  is a choice function for $X$.
\end{proof}

\subsection{Sharp derivatives}

If $f:X\to Y$ is a definable function we will write
$f\in\Omega_{\cF,D}$ as shorthand for $\gr f\in\Omega_{\cF,D}$.

\begin{Def}
  We say that $(\cS,\Omega)$ has \emph{sharp derivatives} if for every
  $\cF\in\N$ there are
  \begin{align}
    a_\cF&\in\N, &  b_\cF&\in\N[D,k]
  \end{align}
  such that the following holds. Given a definable $f:\R^n\to\R$ with
  $f\in\Omega_{\cF,D}$, we have for every $\alpha\in\Z_{\ge0}^n$
  \begin{equation}
    f^{(\alpha)}\in\Omega_{a_\cF,b_\cF(D,|\alpha|)}.
  \end{equation}
\end{Def}

Here $f^{(\alpha)}$ denotes the function with domain of definition
equal to the interior of the locus where $f$ is continuously
differentiable to order $|\alpha|$.

\begin{Rem}
  In every \so-minimal structure we have
  $f^{(\alpha)}\in\Omega_{a_{\cF,|\alpha|},b_{\cF,|\alpha|}(D)}$ with
  $b_{\cF,|\alpha|}\in\N[D]$. Sharp derivatives means that as we take
  derivatives of high order, the format remains fixed and the degree
  depends polynomially on the order. We do not know whether this holds
  for general \so-minimal structures.
\end{Rem}

\section{The restricted sub-Pfaffian structure $\R_\rPfaff$}
\label{sec:rPfaff}

In this section we let $\Omega$ denote the \so-minimal filtration on
$\R_\rPfaff$ introduced in \cite{bv:rpfaff}. The main result of
loc. cit. is that $(\R_\rPfaff,\Omega)$ is a \so-minimal structure
admitting sharp cell decomposition.

\begin{Rem}
  A small technical issue is that in \cite{bv:rpfaff} we considered
  only subsets of $[0,1]^n$, whereas in the o-minimal setting it is of
  course customary to work in $\R^n$. It is a routine matter to
  translate the results of \cite{bv:rpfaff} to this alternative
  context. Say $\Omega'$ denotes the FD-filtration introduced in
  \cite{bv:rpfaff}. Fix an algebraic diffeomorphism $\phi:\R\to(0,1)$,
  and by abuse of notation also write $\phi:\R^n\to(0,1)^n$ for
  $\phi^{\times n}$. Then one can define
  \begin{equation}
    \Omega_{\cF,D} := \{\phi^{-1}(X) : X\in\Omega_{\cF,D}'\},
  \end{equation}
  and deduce \so-minimality and sharp cell decomposition for
  $(\R_\rPfaff,\Omega)$ from the results of \cite{bv:rpfaff} for
  $\Omega'$.

  Another small issue is that the *-format and *-degree introduced in
  \cite{bv:rpfaff} does not exactly satisfy the axioms of
  \so-minimality as defined in~\secref{sec:so-minimality}, though this
  is a matter of a simple re-indexing. To obtain a \so-minimal
  structure one can consider $\Omega_{\cF,D}$ to be given by the
  collection of restricted sub-Pfaffian sets defined by first-order
  formulas of *-format $\cF$ and *-degree $D$, as defined in
  \cite[Definition~7]{bv:rpfaff}.
\end{Rem}

In the remainder of the section we will prove the following.

\begin{Thm}\label{thm:rPfaff-sharp-der}
  The structure $(\R_\rPfaff,\Omega)$ has sharp derivatives.
\end{Thm}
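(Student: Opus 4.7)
The plan is to reduce to a local Pfaffian statement via the sharp cell decomposition of $(\R_\rPfaff, \Omega)$, and then exploit the closure of a fixed Pfaffian chain under differentiation with linear degree growth. Applying sharp cell decomposition to $\gr f \in \Omega_{\cF, D}$ yields a cylindrical decomposition $\{C_\eta\}$ of $\R^{n+1}$ compatible with $\gr f$, with $\#\{C_\eta\} = \poly_\cF(D)$ and each $C_\eta \in \Omega_{O_\cF(1), \poly_\cF(D)}$. On each top-dimensional cell $C_\eta \subset \dom f$, the graph of $f$ is a graph-cell $C_\eta \odot \{a_\eta(z)\}$. The construction of $\Omega$ in \cite{bv:rpfaff} moreover gives a local implicit representation: a Pfaffian chain $\zeta_1, \ldots, \zeta_r$ drawn from the finite pool used to define $f$, and a polynomial $G_\eta(x, y, \vec\zeta)$ with $\partial G_\eta / \partial y \ne 0$ along the graph and with $\deg G_\eta$, $r$, and chain complexity all $\poly_\cF(D)$, such that $G_\eta(x, a_\eta(x), \vec\zeta(x)) = 0$.

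Next I would show by induction on $|\alpha|$ that
\begin{equation}
  a_\eta^{(\alpha)}(x) = \frac{P_\alpha(x, a_\eta(x), \vec\zeta(x))}{Q_\alpha(x, a_\eta(x), \vec\zeta(x))^{m_\alpha}}
\end{equation}
where $P_\alpha, Q_\alpha$ are polynomials and $\deg P_\alpha$, $\deg Q_\alpha$, and $m_\alpha$ are all $\poly_\cF(D, |\alpha|)$. The base case uses implicit differentiation: $\partial a_\eta / \partial x_i = -(\partial G_\eta / \partial x_i) / (\partial G_\eta / \partial y)$, where each partial $\partial G_\eta / \partial x_i$ expands via the chain rule using $\partial \zeta_j / \partial x_i = R_{ji}(x, \zeta_1, \ldots, \zeta_j)$, a polynomial of degree bounded by the chain complexity. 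The inductive step differentiates the rational expression once more. The crucial observation is that one step of such differentiation raises $\deg P$, $\deg Q$, and $m$ each by a bounded additive constant depending only on $\cF$ and $\poly_\cF(D)$, so after $|\alpha|$ iterations these quantities remain $\poly_\cF(D, |\alpha|)$.

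Finally I would assemble $\gr f^{(\alpha)}$ over the $C^{|\alpha|}$-interior of $\dom f$ as the union of the graphs of the $a_\eta^{(\alpha)}$. Each such graph is cut out by a first-order formula of the shape $\exists y, w_1, \ldots, w_r \bigl( w_j = \zeta_j(x) \wedge G_\eta(x, y, w) = 0 \wedge t \cdot Q_\alpha(x, y, w)^{m_\alpha} = P_\alpha(x, y, w) \bigr)$, of bounded logical depth. Since the underlying Pfaffian chain is drawn from a fixed pool and the depth of the formula is $O_\cF(1)$, the *-format stays $O_\cF(1)$; the *-degree is $\poly_\cF(D, |\alpha|)$ by the inductive bounds together with the bound on the number of cells. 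The $C^{|\alpha|}$-locus itself is first-order definable with bounded-complexity formulas in $f$, so it can be imposed without disturbing these bounds.

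The main obstacle is the additive per-step degree increment of the induction. A naive Fa\`a di Bruno expansion produces a combinatorial blowup in the number of terms; the saving is that every new term lies in the same fixed Pfaffian chain, so after clearing denominators the degrees of $P_\alpha, Q_\alpha$ grow only additively per differentiation, not multiplicatively. Verifying this linear, hence polynomial, degree growth is the heart of the argument and rests on the algebraic closure of the ring $\R[x, \zeta_1, \ldots, \zeta_r]$ under the derivations $\partial / \partial x_i$.
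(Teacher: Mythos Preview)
Your overall strategy---implicit differentiation in a Pfaffian presentation with additive degree growth per step---is the same as the paper's, and your per-step degree bookkeeping is essentially right once the setup is correct. But the implicit-representation claim is a real gap. After sharp cell decomposition the boundary functions $a_\eta$ are only \emph{sub}-Pfaffian: their graphs are projections of semi-Pfaffian sets living in some $\R^N$ with $N>n+1$. There is no reason $a_\eta$ satisfies a \emph{single} equation $G_\eta(x,y,\vec\zeta)=0$ with $\partial_y G_\eta\neq0$; that would make $a_\eta$ Nash over the given chain, which is strictly smaller than sub-Pfaffian. Your final first-order formula quantifies only over $y$ and over dummy names $w_j=\zeta_j(x)$ for chain elements in the base variables, so it never recovers the projected-out coordinates that the sub-Pfaffian presentation actually needs.

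The paper handles exactly this point by skipping the cell decomposition and going straight to the definition $\gr f=\bigcup_i\pi_{n+1}(X_i^\circ)$ with each $X_i\subset\R^N$ semi-Pfaffian. After a Gabrielov--Vorobjov stratification one obtains an effectively smooth $X$ cut out by a \emph{system} $F_1=\cdots=F_{N-n}=0$ with $\d F_1\wedge\cdots\wedge\d F_{N-n}\neq0$, and $f(x)=y_1(x)$ where $y=(y_1,\ldots,y_{N-n})$ is the full implicit-function solution. Then $\partial y/\partial x=-(\partial F/\partial y)^{-1}(\partial F/\partial x)$ together with Cramer's rule gives each $\partial y_i/\partial x_j=P_{ij}/Q$ with $Q=\det(\partial F/\partial y)$, and iteration yields $f^{(\alpha)}=P_\alpha/Q^{2|\alpha|}$ with $\deg P_\alpha=\poly_\cF(D)\cdot|\alpha|$. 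The correction to your outline is thus to carry the extra variables $y_2,\ldots,y_{N-n}$ and the full system of equations throughout, with the common denominator being the Jacobian determinant of the system rather than a single $\partial_y G$; once that is done your induction and your assembly-by-formula step go through as written.
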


Let $U\subset\R^n$ and $f:U\to\R$ with $f\in\Omega_{\cF,D}$ and
$\Gamma:=\gr f$. By the definition from \cite{bv:rpfaff},
\begin{equation}
  \Gamma = \cup_i \pi_{n+1} X_i^\circ
\end{equation}
where i) each $X_i^\circ$ is a connected component of a semi-Pfaffian
$X_i\subset\R^N$ of degree $\poly_\cF(D)$ for some $N=N(\cF)$, and
ii) the number of $X_i$ is $\poly_\cF(D)$. Moreover according to
\cite[Lemma~18]{bv:rpfaff} we may assume that the projection
$\pi_{n+1}\rest{X_i}$ has finite fibers.

Fix one $X=X_i$ with $\pi_{n+1}(X_i^\circ)$ of full dimension in
$\Gamma$. The general case easily reduces to this at the end. By
\cite{gv:stratification} we may further assume that $X$ is effectively
smooth, i.e. is defined by a semi-Pfaffian system
\begin{equation}\label{eq:semipfaff-system}
  \{F_1=\ldots=F_{N-n}=0\}\cap\{G_1>0,\ldots,G_M>0\}
\end{equation}
with the differential $\d F_1\wedge\cdots\wedge\d F_{N-n}$
non-vanishing on $X$. The degrees of the $F_i,G_j$
in~\eqref{eq:semipfaff-system} are $\poly_\cF(D)$. Removing a
smaller-dimensional part, we may assume that the projection
$\pi_n\rest{X}$ is everywhere submersive.

Denote the coordinates on $\R^N$ by $(x,y)$ where
\begin{align}
  x&=(x_1,\ldots,x_n), & y&=(y_1,\ldots,y_{N-n}).
\end{align}
By the implicit function theorem and our setup above, around every
point in $X$ one can express $y$ as a smooth function of $x$, and
\begin{equation}
  F(x,y)=0 \implies \pd{F}{x}+\pd{F}{y}\cdot \pd{y}{x}=0\implies \pd{y}{x} = -\bigg(\pd{F}{y}\bigg)^{-1} \pd{F}{x},
\end{equation}
where $F=(F_1,\ldots,F_{N-n})$. Note that $\pd{F}{y}$ is invertible
everywhere on $X$ by our setup. Note $f(x)=y_1(x)$ on
$\pi_n(X^\circ)$. Since $F$ is a vector of Pfaffian functions, all the
derivatives in the right hand side are again Pfaffian, and using
$A^{-1}=\det^{-1}(A)\operatorname{adj}(A)$ we can write each
$\pd{y_i}{x_j}$ in the form $P_{ij}/Q$ where $P_{ij}$ is a Pfaffian
function and $Q=\det\pd{F}{y}$. Using this, one can rewrite
$f^{(\alpha)}(x)=y_1^{(\alpha)}(x)$ as a ratio of Pfaffian functions
$P_\alpha/Q^{2|\alpha|}$ with
$\deg P_\alpha=\poly_\cF(D)\cdot|\alpha|$, the asymptotic constants
depending on the Pfaffian chain used to define $f$ (which are part of
the format $\cF$). This ratio is not formally Pfaffian, but adding a
variable $z$ and an equation $Q^{2|\alpha|}z=P_\alpha$ to the
equations of $X$ gives a set $Z\subset\R^{N+1}$ with a connected
component $Z^\circ$ lying over $X^\circ$, such that the projection of
$Z$ to $(x,z)$ is the graph of $f^{(\alpha)}$ over $\pi_{n}(X^\circ)$.

Recall that the union of $\poly_\cF(D)$ sets $X^\circ$ as above define
a dense subset of $\Gamma$. We have thus seen how to define a dense
subset of $\gr f^{(\alpha)}$ with format $O_\cF(1)$ and degree
$\poly_\cF(D,|\alpha|)$. By \so-minimality, the closure of this dense
subset, $\Gamma_\alpha$, has similarly bounded format and degree.

Finally, the open set $D_\alpha\subset U$ equal to the interior of the
locus where $\Gamma_\alpha$ is the graph of a continuously
differentiable function is also in
$\Omega_{O_\cF(1),\poly_\cF(D,|\alpha|)}$ by \so-minimality. Setting
\begin{equation}
  \Gamma_\alpha' = \Gamma_\alpha\cap \bigcap_{|\beta|<|\alpha|} D_\beta
\end{equation}
defines the graph of $y^{(\alpha)}$ with the correct domain of
definition, and the format and degree bounds follow by \so-minimality.

\section{Norms on $C^r$-functions}

For
\begin{equation}
  P=\sum_{|\alpha|\le r} a_\alpha t^\alpha\in\R[t_1,\ldots,t_m]
\end{equation}
we denote by $\cM P=\sum_{|\alpha|\le r} |a_\alpha| t^\alpha$ the
\emph{majorant}. We set $\norm{P}:=\cM P(1,\ldots,1)$.

For a $C^r$-smooth function $f:U\to\R$ on a domain $U\subset\R^m$ we
and $x_0\in U$ we denote by
\begin{equation}
  j_{x_0}^r f=\sum_{|\alpha|\le r} \frac{f^{(\alpha)}(x_0)}{\alpha!} t^\alpha
\end{equation}
the $r$-jet of $f$ at $x_0$. We define two norms on $f$ as follows,
\begin{align}
  \norm{f}_r &:= \max_{|\alpha|\le r} \norm{D^\valpha f}, & \norm{f}_{T,r} &:= \sup_{x\in U} \norm{j^r_xf}.
\end{align}
As in~\secref{sec:intro-Cr} we extend this to $F:U\to\R^n$ by
coordinate-wise maximum.

For our purposes these two norms are essentially equivalent. Indeed,
on the one hand we have
\begin{equation}\label{eq:Tr-less-T}
  \norm{f}_{T,r} \le e^m \norm{f}_r.
\end{equation}
On the other hand the following lemma is immediate.
\begin{Lem}\label{lem:T-less-Tr}
  Suppose $f:I^n\to \R$ with $\norm{f}_{T,r}\le 1$. Let
  $\phi:I^n\to I^n$ be a diagonal affine map with $\Im\phi$ a cube of
  side-length $1/r$. Then $\norm{f\circ\phi}_r\le 1$.
\end{Lem}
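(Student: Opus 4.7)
The plan is to reduce to a pointwise bound on the derivatives of $f$ and then exploit the contraction factor supplied by the affine chain rule. Write $\phi(t)=x_0+r^{-1}t$ with $x_0\in I^n$ chosen so that $\Im\phi\subset I^n$. Since $\phi$ is a diagonal scaling of ratio $r^{-1}$, the chain rule gives
\begin{equation}
  D^\alpha(f\circ\phi)(t)=r^{-|\alpha|}(D^\alpha f)(\phi(t))
\end{equation}
for every multi-index $\alpha$, so bounding $\norm{f\circ\phi}_r$ reduces to bounding $r^{-|\alpha|}|f^{(\alpha)}(x)|$ uniformly for $x\in\Im\phi$ and $|\alpha|\le r$.

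Next I would unpack the hypothesis $\norm{f}_{T,r}\le 1$. By definition of the majorant norm, $\norm{j_x^rf}=\sum_{|\beta|\le r}|f^{(\beta)}(x)|/\beta!$, so the assumption immediately yields the pointwise estimate $|f^{(\alpha)}(x)|\le\alpha!$ for every $x\in I^n$ and every $|\alpha|\le r$.

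Combining the two, it suffices to check the elementary inequality $\alpha!\le r^{|\alpha|}$ whenever $|\alpha|\le r$. For each coordinate one has $\alpha_i!\le\alpha_i^{\alpha_i}\le r^{\alpha_i}$ (using $\alpha_i\le|\alpha|\le r$), and multiplying over $i$ gives $\alpha!\le r^{|\alpha|}$. Therefore $r^{-|\alpha|}|f^{(\alpha)}(\phi(t))|\le 1$, which is exactly $\norm{f\circ\phi}_r\le 1$.

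There is no real obstacle here, the whole content being the interplay between the factorial denominators in the Taylor jet and the $r^{-|\alpha|}$ contraction produced by rescaling to a cube of side-length $1/r$; the latter is precisely calibrated to absorb the former.
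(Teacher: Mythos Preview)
Your proof is correct and is exactly the routine verification the paper has in mind; the paper itself declares the lemma ``immediate'' and gives no further argument, and your computation---chain rule for the diagonal rescaling plus the elementary bound $\alpha!\le r^{|\alpha|}$ for $|\alpha|\le r$---is precisely how one makes that immediacy explicit.
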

As a consequence of Lemma~\ref{lem:T-less-Tr}, given functions of unit
$(T,r)$-norm on $I^n$ we can always rescale to obtain bounded $r$-norms
using $r^n$ charts.

We usually state our results with $\norm{f}_r$, but in some cases
$\norm{f}_{T,r}$ is more technically convenient, mainly because of the
following submultiplicativity and subcompositionality properties..

\begin{Lem}\label{lem:submul-subcomp}
  The following estimates for products and compositions hold:
  \begin{enumerate}
  \item Let $f,g:U\to\R$ be $C^r$-smooth. Then
    $\norm{fg}_{T,r}\le\norm{f}_{T,r}\cdot\norm{g}_{T,r}$.
  \item Let $F:U\to\R^n$ and $g:V\to\R$ be $C^r$-smooth with
    $\Im f\subset V$. Suppose $\norm{F_i}_{T,r}\le 1$ for
    $i=1,\ldots,n$. Then $\norm{g\circ F}_{T,r}\le \norm{g}_{T,r}$.
  \end{enumerate}
\end{Lem}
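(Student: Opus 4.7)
The plan is to reduce both estimates to the $\ell^1$-submultiplicativity of the coefficient-sum norm on polynomials. Indeed, $\norm{P}=\cM P(1,\ldots,1)=\sum_\alpha|a_\alpha|$, and from the definition of polynomial multiplication together with the triangle inequality one gets $\norm{PQ}\le\norm{P}\cdot\norm{Q}$ for any two polynomials. Moreover, truncation $T_{\le r}$ of a polynomial to terms of total degree $\le r$ only removes summands, so it can only decrease $\norm{\cdot}$. These two facts will carry the entire argument once suitable polynomial identities are in place.

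For part (1), the key observation is the jet-level identity $j^r_x(fg)=T_{\le r}\bigl(j^r_x f\cdot j^r_x g\bigr)$. This is immediate from the Leibniz rule for higher derivatives: the coefficient of $t^\gamma$ in $j^r_x(fg)$ is $(fg)^{(\gamma)}(x)/\gamma!=\sum_{\alpha+\beta=\gamma}(f^{(\alpha)}(x)/\alpha!)(g^{(\beta)}(x)/\beta!)$, which is exactly the Cauchy product. Applying $\ell^1$-submultiplicativity and taking $\sup_{x\in U}$ gives the statement.

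For part (2), write $j^r_x F_i=F_i(x)+P_i(t)$ with $P_i(0)=0$, so by hypothesis $\norm{P_i}=\norm{j^r_x F_i}-|F_i(x)|\le 1$. Taylor's theorem applied to $g$ at $F(x)$, together with Taylor expansions of each $F_i$ at $x$, gives the chain-rule jet identity
\[
j^r_x(g\circ F)=T_{\le r}\bigl((j^r_{F(x)}g)(P_1(t),\ldots,P_n(t))\bigr),
\]
where all remainder terms are killed by truncation since they contribute only $o(|t|^r)$. Writing $j^r_{F(x)}g=\sum_{|\beta|\le r}b_\beta s^\beta$ and iterating $\ell^1$-submultiplicativity on each monomial $\prod_i P_i^{\beta_i}$ (using $\norm{P_i}\le 1$), one bounds $\norm{j^r_x(g\circ F)}$ by $\sum_{|\beta|\le r}|b_\beta|=\norm{j^r_{F(x)}g}\le\norm{g}_{T,r}$.

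The only slightly delicate point is verifying the composition identity for jets, which is essentially Fa\`a di Bruno written at the level of polynomials and requires care in bookkeeping the non-constant parts $P_i$ and discarding the $o(|t|^r)$ remainder via truncation. Once this and the product identity are in place, everything reduces to the trivial fact that the sum-of-absolute-values norm on polynomial coefficients is submultiplicative.
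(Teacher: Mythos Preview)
Your proof is correct and follows essentially the same approach as the paper's: both rest on the jet identities $j^r_x(fg)=T_{\le r}(j^r_xf\cdot j^r_xg)$ and the analogous substitution identity for $j^r_x(g\circ F)$, combined with $\ell^1$-submultiplicativity of $\norm{\cdot}$ on polynomial coefficients and monotonicity under truncation. Your handling of the composition case, explicitly separating off the constant terms $F_i(x)$ so that the substitution involves the zero-constant parts $P_i$, is in fact a touch more careful than the paper's one-line sketch.
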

\begin{proof}
  Part i follows from
  \begin{equation}
    [\cM j_x^r(fg)](1,\ldots,1) \le [\cM j_x^rf](1,\ldots,1)\cdot [\cM j_x^rg](1,\ldots,1)
  \end{equation}
  which holds since $j_x^r(fg)$ is just $j_x^r(f)j_x^r(g)$
  truncated to degree $r$. Part ii follows in a similar fashion, this
  time noting that $j^r_x(g\circ F)$ is just
  $j_{F(x)}^r(g)(j_x^rF_1,\ldots,j_x^rF_n)$ truncated to degree
  $r$.
\end{proof}

\section{Proof of the algebraic lemma}

We will prove the algebraic lemma in the following equivalent form
which is more suitable for an inductive argument. Below, we think of
maps $F:\Lambda\times I^n\to I^k$ as definable families of maps
$\{F_\lambda:I^n\to I^k\}_{\lambda\in\Lambda}$, where
$F_\lambda:=F(\lambda,\cdot)$.

\begin{Lem}\label{lem:F-lem}
  Let $r\in\N$ and $\e>0$. Let $F:\Lambda\times I^n\to I^k$ with
  $F_j\in\Omega_{\cF,D}$ for $j=1,\ldots,k$. Then there exists a
  collection $\{\phi_\eta:\Lambda\times I^n\to I^n\}$ such that for
  every $\lambda\in\Lambda$ we have i)
  $\norm{F_\lambda\circ \phi_{\eta,\lambda}}_r\le 1$, ii)
  $\cup_\eta \Im(F_\lambda \circ\phi_{\eta,\lambda})\sube\Im F_\lambda$,
  and iii)
  \begin{equation}
    \#\{\phi_\eta\} \le \poly_\cF(D,r,k,|\log\e|), \qquad \forall\eta: \phi_\eta\in\Omega_{O_\cF(1),\poly_\cF(D,r)}.
  \end{equation}
\end{Lem}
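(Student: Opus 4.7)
My plan is to prove the lemma by induction on $n$, the fiber dimension, with the one-dimensional case ($n=1$) carrying the main technical content. The base case $n=0$ is trivial since $F_\lambda$ is a point. For the inductive step, write $I^n=I^{n-1}\times I$ with coordinates $(z,w)$ and view $F$ as a family of maps $I\to I^k$ parametrized by $(\lambda,z)\in\Lambda\times I^{n-1}$. Apply the case $n=1$ to obtain reparametrizations $\psi_\eta:(\Lambda\times I^{n-1})\times I\to I$ of the last coordinate. Let $G_\eta:=F\circ(\mathrm{id},\psi_\eta)$; by the \so-minimality axioms and sharp derivatives each component of $G_\eta$ lies in $\Omega_{O_\cF(1),\poly_\cF(D,r)}$. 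Apply the inductive hypothesis to $G_\eta$ viewed as a family parametrized by $(\lambda,w)\in\Lambda\times I$ of maps $I^{n-1}\to I^k$, obtaining reparametrizations $\psi'_\xi$ of the $z$-coordinates. The combined reparametrization $\phi_{\eta,\xi}(\lambda,z,w):=(\psi'_\xi(\lambda,w,z),\psi_\eta(\lambda,\psi'_\xi(\lambda,w,z),w))$ gives the required $\phi$. By the sub-compositional property of $\norm{\cdot}_{T,r}$ in Lemma~\ref{lem:submul-subcomp}, the global $(T,r)$-bound on $F\circ\phi$ follows from the componentwise $(T,r)$-bounds on each $\phi_{\eta,\xi}$, and Lemma~\ref{lem:T-less-Tr} converts this to the required $\norm{F_\lambda\circ\phi_{\eta,\lambda}}_r\le 1$ at the cost of $r^n=\poly_n(r)$ additional affine subdivisions. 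Since each step multiplies the count by at most $\poly_\cF(D,r,k,|\log\e|)$, and the accumulated polynomial exponent $O(n)$ is absorbed using $n\le\cF$, the final count remains $\poly_\cF(D,r,k,|\log\e|)$.

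The one-dimensional step is the crux. I first reduce to $k=1$, treating all components of $F$ simultaneously in the cell decomposition below (the factor of $k$ enters the allowed polynomial). Sharp derivatives place each $\partial^i_w F$ for $i\le r$ in $\Omega_{O_\cF(1),\poly_\cF(D,r)}$. Apply sharp cell decomposition compatibly with the graphs of these $r{+}1$ functions to partition $\Lambda\times I$ into $\poly_\cF(D,r,k)$ cells of format $O_\cF(1)$ and degree $\poly_\cF(D,r)$, on each of which $F$ and every $\partial^i_w F$ is monotone in $w$ and of constant sign. On each such cell I partition $w$ dyadically according to $M(w):=\norm{j^r_w F_\lambda}$: the level set $\{w:M(w)\in[2^j,2^{j+1})\}$ is definable with polynomial format and degree, hence has $\poly_\cF(D,r)$ connected components, and on each I apply an \emph{affine} rescaling to an interval of length $2^{-j}$. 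By the definition of $\norm{\cdot}_{T,r}$ and the fact that affine rescaling by a factor $h$ scales $\partial^i_w$ by $h^i$, this yields $\norm{F\circ\phi}_{T,r}\le 1$ on each piece. The $\e$-cover relaxation lets me truncate the dyadic partition at $j=\lceil\log_2(1/\e)\rceil$: any piece requiring a smaller rescaling is discarded, and by the monotonicity of $F$ on the cell its $F$-image is a short interval whose endpoints lie in the neighboring kept pieces, hence is already $\e$-covered. This limits the dyadic scales to $O(|\log\e|)$, so the 1D step uses $\poly_\cF(D,r,k,|\log\e|)$ pieces.

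The main obstacle is arranging that polynomial-in-$r$ growth is preserved through the composition in the inductive step — particularly controlling mixed partial derivatives of $F\circ\phi$, the classical difficulty that has forced Yomdin-type arguments into iteration on $r$ and non-polynomial bounds. The resolution rests on the sub-compositional property of $\norm{\cdot}_{T,r}$ together with the use of \emph{purely affine} reparametrizations, enabled by the $\e$-cover relaxation: once each $\phi_i$ has $\norm{\phi_i}_{T,r}\le 1$ (which for affine maps reduces to a single side-length condition), composition with $F$ automatically preserves the $(T,r)$-bound. A subsidiary issue is rigorously verifying the cover condition $\sube$ under the dyadic truncation and through the two-step inductive composition; both are handled using the monotonicity of $F$ and its derivatives on each cell of the sharp cell decomposition, which turns discarded short intervals in the domain into short image intervals bracketed by the kept images.
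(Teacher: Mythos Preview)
Your inductive step has a genuine gap. After the two passes you obtain, for each fixed $w$, a bound on the $z$-derivatives of $H_\lambda(z,w):=G_\eta(\lambda,\psi'_\xi(\lambda,w,z),w)$, and separately, for each fixed $z'$, a bound on the $w$-derivatives of $G_\eta(\lambda,z',\cdot)$. This does \emph{not} bound the mixed partials $\partial_z^\alpha\partial_w^\beta H_\lambda$, nor even the pure $w$-derivatives of $H_\lambda$: already $\partial_w H_\lambda$ contains the term $(\partial_{z'}G_\eta)\cdot(\partial_w\psi'_\xi)$, and neither factor is controlled by your two steps. Your appeal to Lemma~\ref{lem:submul-subcomp} is circular: sub-compositionality yields $\norm{g\circ\Phi}_{T,r}\le\norm{g}_{T,r}$ only once the outer function $g$ already has bounded $(T,r)$-norm, and here $g$ is $F$ (or $G_\eta$), whose full $(T,r)$-norm is precisely what the lemma must establish. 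Fiberwise affinity of the $\psi$'s does not rescue this, because their affine coefficients depend on the base variables, so the composed $\phi$ is not affine in $(z,w)$. Controlling mixed partials is exactly the classical obstruction in Yomdin--Gromov, and your scheme does not address it.

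The paper's inductive step works by a different mechanism. After using the inductive hypothesis (with $x$ as an extra parameter) to bound all $y$-derivatives, it selects for each $(i,\alpha)$ a definable curve $\Gamma_{i,\alpha}$ in the $y$-fiber along which $|F_i^{(\alpha)}|$ is within a factor $2$ of its fiberwise supremum, and applies the one-dimensional \emph{affine} reparametrization (Corollary~\ref{cor:F1-affine}) in $x$ simultaneously to these curves and to the pure $y$-derivatives of $F$ restricted to them. Because the $x$-reparametrization is affine, the maximizing property of $\Gamma_{i,\alpha}$ survives (equation~\eqref{eq:Gamma-i-alpha-cond}); a chain-rule computation along $\Gamma_{i,\alpha}$ (Lemma~\ref{lem:main-inductive-computation}) then bounds $F^{(\alpha)}$ on the curve by induction on $\alpha_1$, hence everywhere by the maximizing property. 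There is also a smaller gap in your one-dimensional step: discarding the dyadic layers with $M(w)>1/\e$ gives an $\e$-cover of the \emph{domain} $I$, but your claim that the $F$-image of a discarded piece is short is unjustified---by monotonicity it is an interval whose endpoints lie in kept images, but its interior need not, and its length can be order $1$. The paper handles this by first making $F_\lambda$ $1$-Lipschitz via a separate (non-affine) $C^1$-parametrization (Lemma~\ref{lem:full-C1}) before the constant-sign decomposition.
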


Lemma~\ref{lem:F-lem} implies the following family version of
Lemma~\ref{lem:alg-lem}.

\begin{Lem}\label{lem:alg-lem-family}
  Let $r\in\N$ and $\e>0$. Let $X\subset\Lambda\times I^n$ with
  $\mu:=\max_\lambda\dim X_\lambda$ and $X\in\Omega_{\cF,D}$. Assume
  $X$ has no empty fibers. Then there exists a collection
  $\{\phi_\eta:\Lambda\times I^\mu\to X\}$ such that for every
  $\lambda\in\Lambda$ we have i) $\norm{\phi_{\eta,\lambda}}_r\le 1$,
  ii) $\cup_\eta \Im\phi_{\eta,\lambda}\sube X_\lambda$, and iii)
  \begin{equation}
    \#\{\phi_\eta\} \le \poly_\cF(D,r,|\log\e|), \qquad \forall\eta: \phi_\eta\in\Omega_{O_\cF(1),\poly_\cF(D,r)}.
  \end{equation}
\end{Lem}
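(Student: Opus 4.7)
The plan is to reduce Lemma~\ref{lem:alg-lem-family} to Lemma~\ref{lem:F-lem} by cell-decomposing $X$ and parametrizing each cell canonically via its cylindrical structure.

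First I would apply sharp cell decomposition to the pair $(\Lambda\times I^n,X)$, discard the cells not contained in $X$, and (by refining the cylindrical structure over $\Lambda$) arrange that each remaining cell $C_\eta$ has constant fiber dimension $d_\eta\le\mu$. There are $\poly_\cF(D)$ such cells, each in $\Omega_{O_\cF(1),\poly_\cF(D)}$. For each cell I would construct inductively on the ambient dimension a canonical parametrization $\psi_\eta\colon\Lambda\times I^{d_\eta}\to C_\eta$ by reading off the cylindrical data: for a graph cell $C_\eta=C'_\eta\odot\{a(z)\}$ set $\psi_\eta(\lambda,u)=(\psi'_\eta(\lambda,u),\,a\circ\psi'_\eta(\lambda,u))$, and for a band cell $C_\eta=C'_\eta\odot(a(z),b(z))$ of dimension $d$ set
\begin{equation}
\psi_\eta(\lambda,u_1,\ldots,u_d) = \bigl(\psi'_\eta(\lambda,u_1,\ldots,u_{d-1}),\; a(\psi'_\eta) + u_d\bigl(b(\psi'_\eta)-a(\psi'_\eta)\bigr)\bigr).
\end{equation}
Since $X\subset\Lambda\times I^n$ the bands are bounded, so no clipping is required. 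The cylindrical graph functions $a,b$ carry complexity $O_\cF(1),\poly_\cF(D)$ as part of the sharp cell decomposition data, hence each component of $\psi_\eta$ lies in $\Omega_{O_\cF(1),\poly_\cF(D)}$.

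Next I would apply Lemma~\ref{lem:F-lem} to $\psi_\eta$ regarded as a family of maps $I^{d_\eta}\to I^n$ (i.e.\ with $k=n$), obtaining a collection $\{\phi_{\eta,i}\}$ with $\norm{\psi_{\eta,\lambda}\circ\phi_{\eta,i,\lambda}}_r\le 1$ and $\bigcup_i \Im(\psi_{\eta,\lambda}\circ\phi_{\eta,i,\lambda}) \sube C_{\eta,\lambda}$. The count is $\poly_\cF(D,r,n,|\log\e|) = \poly_\cF(D,r,|\log\e|)$ because $n\le\cF$, and each $\phi_{\eta,i}$ is in $\Omega_{O_\cF(1),\poly_\cF(D,r)}$.

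Finally I would define $\Phi_{\eta,i}\colon\Lambda\times I^\mu\to X$ by
\begin{equation}
\Phi_{\eta,i}(\lambda,v_1,\ldots,v_\mu):=\psi_\eta\bigl(\lambda,\phi_{\eta,i}(\lambda,v_1,\ldots,v_{d_\eta})\bigr),
\end{equation}
discarding the last $\mu-d_\eta$ coordinates; the coordinate projection is smooth of unit $r$-norm, so $\norm{\Phi_{\eta,i,\lambda}}_r\le 1$. Fiberwise the images $\e$-cover $X_\lambda=\bigcup_\eta C_{\eta,\lambda}$, the total count across $\eta$ and $i$ is $\poly_\cF(D,r,|\log\e|)$, and each $\Phi_{\eta,i}$ lies in $\Omega_{O_\cF(1),\poly_\cF(D,r)}$ as a composition of maps of that complexity. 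The real work resides in Lemma~\ref{lem:F-lem}; the only subtlety in the present reduction is to check that sharp cell decomposition delivers simultaneously the polynomial number of cells and the polynomial complexity of the cylindrical graph functions $a,b$, which is its defining property.
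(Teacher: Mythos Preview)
Your proposal is correct and follows essentially the same route as the paper: cell-decompose $\Lambda\times I^n$ compatibly with $X$, parametrize each cell by its cylindrical data to obtain maps $F_\theta:\Lambda\times I^\mu\to I^n$, and then apply Lemma~\ref{lem:F-lem} to each of these. The paper's proof is three lines and leaves the construction of the $F_\theta$ implicit; your version spells it out, which is fine.

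One small point you glossed over: a cell $C_\eta$ of the decomposition lives only over a base cell $\Lambda_\eta\subset\Lambda$, so your map $\psi_\eta$ is really defined on $\Lambda_\eta\times I^{d_\eta}$, not on all of $\Lambda\times I^{d_\eta}$. To get maps with domain all of $\Lambda$ as the statement requires, you either invoke the subdivision remark (handle each $\Lambda_\eta$ separately and reassemble) or extend each $\psi_\eta$ over $\Lambda\setminus\Lambda_\eta$ by sending everything to some point of $X_\lambda$ via definable choice. This is exactly where the non-empty-fibers hypothesis is used, and the paper flags it in the sentence ``The non-empty fibers are required to guarantee we can always do this.'' You should note this explicitly.
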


\begin{proof}
  First perform a cell decomposition of $\Lambda\times I^n$ compatible
  with $X$, to cover $X_\lambda$ by $\poly_\cF(D)$ images
  $\Im F_{\theta,\lambda}$ for $F_\theta:\Lambda\times I^\mu\to I^n$.
  The non-empty fibers are required to guarantee we can always do
  this. Then apply Lemma~\ref{lem:F-lem} to each of these maps. The
  collection of all resulting $F_\theta\circ\phi_\eta$ establishes the
  conclusion the lemma.
\end{proof}

\begin{Rem}
  Suppose $\Lambda=\Lambda_1\cup\cdots\cup\Lambda_N$ is a definable
  subdivision of $\Lambda$ with $N=\poly_\cF(D,r,k,|\log\e|)$ and
  $\Lambda_i\in\Omega_{O_\cF(1),\poly_\cF(D,r,|\log\e|)}$. Suppose we
  prove Lemma~\ref{lem:F-lem} for $F$ restricted to each $\Lambda_i$
  separately, say giving collections
  \begin{equation}
    \{\phi_{i,j}:\Lambda_i\times I^n\to I^n\}\qquad i=1,\ldots,N,\quad j=1,\ldots,M
  \end{equation}
  allowing repetitions to make these collections have the same size
  $M$. Then the collection $\{\phi_j:=\cup_i \phi_{i,j}\}_j$ proves
  Lemma~\ref{lem:F-lem} for $\Lambda$ (the degree and format bounds
  follow from \so-minimality). A similar remark applies for
  Lemma~\ref{lem:alg-lem-family}. In the proof below we will often use
  this subdivision argument without explicit mention.
\end{Rem}

To make the notation more suggestive, we sometimes denote the
coordinates on $\R^n$ by $(x,y_1,\ldots,y_{n-1})$. The proof of
Lemma~\ref{lem:F-lem} will occupy the remainder of this section. We
proceed by induction on $n$, treating the base case $n=1$ in the
following subsection. We record a simple lemma that is useful in many
stages of our argument.

\begin{Lem}
  Let $F:X\to Y$ be $1$-Lipschitz, and suppose
  $\{\phi_\eta:D_i\to X\}$ satisfies
  $\cup_\e\Im\phi_\eta\sube X$. Then
  $\cup_\eta\Im (F\circ\phi_\eta)\sube \Im F$.
\end{Lem}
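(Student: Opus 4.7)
The plan is to unwind the definition $A\sube B$, which means $A\subset B$ and $B\subset U_\e(A)$, and verify each of the two inclusions separately for $A=\cup_\eta\Im(F\circ\phi_\eta)$ and $B=\Im F$. The first inclusion is essentially a tautology: since each $\phi_\eta$ maps into $X$ and $F$ is defined on $X$, we have $\Im(F\circ\phi_\eta)\subset F(X)=\Im F$, and taking the union over $\eta$ gives $\cup_\eta\Im(F\circ\phi_\eta)\subset\Im F$. No hypothesis beyond the domains and codomains of the maps is needed here.

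The second inclusion $\Im F\subset U_\e\bigl(\cup_\eta\Im(F\circ\phi_\eta)\bigr)$ is where the $1$-Lipschitz hypothesis enters. I would take an arbitrary point $y\in\Im F$, write it as $y=F(x)$ for some $x\in X$, then invoke the assumption that $\cup_\eta\Im\phi_\eta$ is an $\e$-cover of $X$ (the $\ell_\infty$-metric being understood per~\secref{sec:intro-Cr}) to find some $\eta$ and some $w\in\dom\phi_\eta$ with $\|x-\phi_\eta(w)\|_\infty<\e$. The $1$-Lipschitz property of $F$ then yields $\|F(x)-F(\phi_\eta(w))\|_\infty<\e$, so $y=F(x)$ lies in the $\e$-neighborhood of the point $F(\phi_\eta(w))\in\Im(F\circ\phi_\eta)\subset\cup_\eta\Im(F\circ\phi_\eta)$, as desired.

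There is no real obstacle: the statement is a direct consequence of the definitions and the metric estimate provided by the Lipschitz bound. The only small point to keep straight is that the $\e$-neighborhoods are with respect to the $\ell_\infty$-metric throughout (as fixed in~\secref{sec:intro-Cr}), and the term ``$1$-Lipschitz'' should accordingly be interpreted with respect to this same metric on both source and target; this is the convention implicit in all prior uses in the paper.
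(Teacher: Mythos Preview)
Your proof is correct and complete. The paper itself states this lemma without proof, treating it as immediate from the definitions; your argument is exactly the natural one-line verification the authors had in mind.
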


In particular this implies that when every $F_\lambda$ is
$1$-Lipschitz we can replace the condition
$\cup_\eta \Im(F_\lambda \circ\phi_{\eta,\lambda})\sube\Im F_\lambda$
in Lemma~\ref{lem:F-lem} by
$\cup_\eta \Im\phi_{\eta,\lambda}\sube I^n$. We will often use this
remark after performing a pullback to satisfy the $1$-Lipschitz
condition.

\subsection{The case $n=1$}

The main difficulty in proving Lemma~\ref{lem:F-lem} is to get
polynomial growth with respect to $r$. For a fixed $r$ the classical
proof of Yomdin-Gromov gives the same statement, even with a true
cover in place of the $\e$-cover. We record below the $C^1$-version
that we will need in the sequel.

\begin{Lem}\label{lem:full-C1}
  Let $F:\Lambda\times I\to I^k$ with $F_i\in\Omega_{\cF,D}$. Then
  there exists a collection $\{\phi_\eta:\Lambda\times I\to I\}$ such
  that for every $\lambda\in\Lambda$ we have i)
  $\norm{F_\lambda\circ\phi_{\eta,\lambda}}_1\le 1$, ii)
  $\cup_\eta \phi_{\eta,\lambda}(I)=I$, and iii)
  \begin{equation}
    \#\{\phi_\eta\} \le \poly_\cF(D,k), \qquad \forall\eta: \phi_\eta\in\Omega_{O_\cF(1),\poly_\cF(D)}.
  \end{equation}
\end{Lem}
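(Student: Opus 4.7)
The plan is to follow the classical Yomdin $C^1$ argument, applied simultaneously to all $k$ coordinate functions of $F$ so as to obtain polynomial rather than exponential dependence on $k$. By sharp derivatives, each partial derivative $G_j := \partial F_j/\partial x$ belongs to $\Omega_{O_\cF(1), \poly_\cF(D)}$. I would form the $O(k^2)$ auxiliary sets
\[
\{G_j > 0\},\quad \{|G_j| < 1\},\quad \{|G_i| > |G_j|\}\quad (i \ne j),
\]
and apply sharp cell decomposition to $\Lambda \times I$ compatibly with this collection. By the definition of sharp cell decomposition this produces $\poly_\cF(D, k)$ cells, each of format $O_\cF(1)$ and degree $\poly_\cF(D)$. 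The decisive point is that the per-cell degree remains polynomial in $D$ alone, independent of $k$.

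On each top-dimensional cell $C = C' \odot (a(\lambda), b(\lambda))$ the sign of every $G_j$, the truth value of each $|G_j| < 1$, and the pointwise ordering of $\{|G_j|\}$ are all constant, so I can choose an index $j_0$ achieving $\max_j |G_j|$ on $C$. If $|G_{j_0}| \le 1$, I define $\phi_C$ to be the affine identification $t \mapsto a(\lambda) + t(b(\lambda) - a(\lambda))$, giving $|(F_j \circ \phi_C)'| \le |G_j|(b - a) \le 1$ for all $j$. Otherwise $F_{j_0, \lambda}$ is strictly monotone on $(a, b)$ and I set
\[
\phi_C(\lambda, t) := F_{j_0, \lambda}^{-1}\bigl(F_{j_0, \lambda}(a(\lambda)) + t(F_{j_0, \lambda}(b(\lambda)) - F_{j_0, \lambda}(a(\lambda)))\bigr),
\]
so that $F_{j_0} \circ \phi_C$ is affine of slope $\le 1$, while for every other $j$ one computes $|(F_j \circ \phi_C)'| = (|G_j|/|G_{j_0}|) \cdot |F_{j_0}(b) - F_{j_0}(a)| \le 1$ using maximality of $|G_{j_0}|$. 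On the 0-dimensional cells, needed to cover the boundary points between adjacent top-dimensional cells, I take constant parametrizations, which trivially satisfy $\norm{F_j \circ \phi}_1 = 0$.

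Since $F_{j_0}^{-1}$ is first-order definable from $F_{j_0}$, \so-minimality yields $\phi_C \in \Omega_{O_\cF(1), \poly_\cF(D)}$; the total number of charts is $\poly_\cF(D, k)$; and the cells of the cylindrical decomposition tile $\Lambda \times I$, so the images cover $I$ fibrewise. The main obstacle, and the reason a more clever organization than classical induction on $k$ is needed, is precisely the polynomial dependence on $k$: handling the $F_j$ sequentially and reparametrizing after each would multiply the chart count by $\poly_\cF(D)$ per step and produce $\poly_\cF(D)^k$ charts. The simultaneous cell decomposition above bypasses this because adding all $k(k-1)/2$ pairwise comparisons to the collection of sets leaves the degree of each individual cell polynomial in $D$ alone, by the definition of sharp cell decomposition.
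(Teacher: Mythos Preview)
Your proof is correct and follows essentially the same approach as the paper: a single sharp cell decomposition compatible with all pairwise comparisons of the $|F_j'|$, followed on each cell by reparametrization via the inverse of the dominant coordinate. The only cosmetic difference is that the paper adjoins the identity $f(x)=x$ to the tuple at the outset, which forces $\max_j|F_j'|\ge 1$ everywhere and thereby eliminates your case split between the affine and inverse-function branches.
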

\begin{proof}
  Assume without loss of generality that $f(x)=x$ is among the
  $F_i$. Denote $(\cdot)'=\pd{}x(\cdot)$. Perform a cell decomposition
  of $\Lambda\times I$ compatible with the sets of zeros of all the
  functions $|F_i'|-|F_j'|$ for $i,j=1,\ldots,k$ as well as with the
  sets of points where $F_i'$ is undefined. We have $\poly_\cF(D,k)$
  cells, each in $\Omega_{O_\cF(1),\poly_\cF(D)}$.

  It will suffice to handle each cell separately. For cells of the
  form $C\odot\{a(\lambda)\}$ one can cover their image by a constant
  map, so consider a cell $C\odot(a(\lambda),b(\lambda))$. Since each
  $|F_i'|-|F_j'|$ is either identically vanishing or identically
  non-vanishing on the cell, there is one $F_i$, without loss of
  generality $F_1$, such that
  \begin{equation}\label{eq:F1-dominates}
    |F_1'|\ge|F_j'| \quad \forall j=2,\ldots,k
  \end{equation}
  uniformly over the cell. In particular $|F_1|\ge1$. Set
  \begin{equation}
    F_1(\lambda,I)=(A(\lambda),B(\lambda))
  \end{equation}
  and define $\tilde\phi:C\odot(A(\lambda),B(\lambda))\to I$ by
  $\tilde\phi(\lambda,s)=(\lambda,F_1^{-1}(s))$. By~\eqref{eq:F1-dominates}
  we have
  \begin{equation}
    \begin{aligned}
      |(F_j\circ\tilde\phi(\lambda,s))'| &= |F_{j,\lambda}'(\tilde\phi_\lambda(s))\tilde\phi_\lambda(s)'|\\
      &=|F_{j,\lambda}'(\tilde\phi_\lambda(s))/F_{1,\lambda}'(\tilde\phi_\lambda(s))|\le 1
    \end{aligned}
  \end{equation}
  so $\norm{F_\lambda\circ\tilde\phi_\lambda}_1\le1$. Finally, let
  $\phi:C\times I\to I$ be the pullback of $\tilde\phi$ by a linear
  rescaling map $C\times I\to C\odot(A(\lambda),B(\lambda))$. Since
  $(A(\lambda),B(\lambda))\subset I$ this only decreases derivatives,
  so the collection of maps $\phi$ thus obtained satisfies the
  conditions of the lemma.
\end{proof}

We first apply Lemma~\ref{lem:full-C1} to $F$. Pulling back $F$ by
each of the $\phi_\eta$ thus obtained, we may assume without loss of
generality that $\norm{F_\lambda}_1\le 1$ for every
$\lambda\in\Lambda$. In particular, each $F_\lambda$ is 1-Lipschitz
(with respect to the $\ell_\infty$-norm).

Perform a cell decomposition of $\Lambda\times I$ compatible with the
sets of zeros of all the functions $F_i^{(j)}$ for $i=1,\ldots,k$ and
$j=0,\ldots,r+1$, as well as the sets of points where these functions
are undefined. We have $\poly_\cF(D,k,r)$ cells, each in
$\Omega_{O_\cF(1),\poly_\cF(D,r)}$. It will suffice to handle each
cell separately. For cells of the form $C\odot\{a(\lambda)\}$ there is
nothing to prove, so we consider cells
$C\odot(a(\lambda),b(\lambda))$. Pulling back by the affine map
$C\odot I\to C\odot(a(\lambda),b(\lambda))$ only decreases
derivatives, so without loss of generality it now suffices to prove
Lemma~\ref{lem:F-lem} assuming each $F_\lambda$ is $1$-Lipschitz and
has constant-signed derivatives up to order $r+1$. The result now
follows from the following lemma.

\begin{Lem}\label{lem:discrete-Cr}
  Let $f:I\to I$ such that $f^{(j)}$ has constant sign for
  $j=0,\ldots,r+1$. Then for every $M>1$ and $j=0,\ldots,r$ we have
  \begin{equation}
    |f^{(j)}(x)| < M^j\text{ whenever }\dist(x,\partial I) > j/M.
  \end{equation}
\end{Lem}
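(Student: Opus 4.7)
The plan is to prove the lemma by induction on $j$. The base case $j=0$ is immediate from $f:I\to I=(0,1)$, which gives $|f(x)|<1=M^0$ for all $x\in I$.

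For the inductive step I assume $|f^{(j-1)}(x)|<M^{j-1}$ on the open interval $((j-1)/M,1-(j-1)/M)$, and seek the analogous bound for $f^{(j)}$ on $(j/M,1-j/M)$. Two facts from the hypothesis are key: because $f^{(j+1)}$ has constant sign, $f^{(j)}$ is monotone on $I$; and because $f^{(j-1)}$ both has constant sign and is monotone (the latter from the constant sign of $f^{(j)}$), its oscillation on any sub-interval of the strip $((j-1)/M,1-(j-1)/M)$ is \emph{strictly} less than $M^{j-1}$. Indeed the two endpoint values have a common sign and each has absolute value strictly less than $M^{j-1}$, so their difference has absolute value strictly less than $M^{j-1}$.

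Suppose for contradiction that $|f^{(j)}(x_0)|\ge M^j$ at some $x_0$ with $\dist(x_0,\partial I)>j/M$. The strict inequality guarantees that both $x_0-1/M$ and $x_0+1/M$ lie inside $((j-1)/M,1-(j-1)/M)$, so the inductive hypothesis applies there. Exploiting the monotonicity of $f^{(j)}$ together with the sign of $f^{(j)}(x_0)$ (four cases, all identical up to reflections), I can select one of the two adjacent intervals $[x_0-1/M,x_0]$ or $[x_0,x_0+1/M]$ on which $|f^{(j)}|\ge M^j$ throughout. Integrating $f^{(j)}$ over this interval yields an endpoint-to-endpoint change in $f^{(j-1)}$ of absolute value at least $M^j\cdot(1/M)=M^{j-1}$, contradicting the oscillation bound established in the previous paragraph and closing the induction.

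The only delicate point is the sign/direction bookkeeping together with the verification that the chosen length-$1/M$ interval adjacent to $x_0$ remains inside $((j-1)/M,1-(j-1)/M)$, which is exactly what the strict hypothesis $\dist(x_0,\partial I)>j/M$ supplies. Everything else reduces to comparing the integral lower bound $M^{j-1}$ with the strict oscillation upper bound $<M^{j-1}$.
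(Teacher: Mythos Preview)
Your proof is correct and follows essentially the same approach as the paper's: induction on $j$, a contradiction argument using monotonicity of $f^{(j)}$ (from the constant sign of $f^{(j+1)}$) to trap a length-$1/M$ interval where $|f^{(j)}|\ge M^j$, and then integration against the inductive bound on $f^{(j-1)}$ together with its constant sign to force the contradiction. Your presentation is in fact slightly more explicit than the paper's about why the constant sign of $f^{(j-1)}$ makes the oscillation strictly less than $M^{j-1}$, and about verifying that the selected adjacent interval stays inside the region where the inductive hypothesis applies.
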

\begin{proof}
  We proceed by induction, the case $j=0$ being vacuous. Suppose the
  claim is proved for $f^{(j)}$. Assume $f^{(j+1)}$ is
  weakly-increasing (or weakly-decreasing, which is analogous) and
  suppose toward contradiction that $f^{(j+1)}(x)\ge M^{j+1}$ for some
  $x$ with $\dist(x,\partial I) > (j+1)/M$ (the case
  $f^{(j+1)}(x)\le -M^{j+1}$ being analogous). Then
  $f^{(j+1)}>M^{j+1}$ throughout the interval $[x,x+1/M]$. Thus
  $f^{(j)}(x+1/M)-f^{(j)}(x)\ge M^j$. This contradicts the inductive
  hypothesis, since both $x$ and $x+1/M$ have distance at least $j/M$
  to $\partial I$, and $f^{(j)}$ is constant-signed and bounded in
  absolute value by $M^j$ at both points.
\end{proof}

It follows from Lemma~\ref{lem:discrete-Cr} that if $\phi:I\to I$ is
an affine translation with the length of $\phi(I)$ smaller than
$\dist(\phi(I),\partial I)/r$ then
$\norm{F_\lambda\circ\phi}_r\le 1$ for every $\lambda$. It is an
elementary exercise that with $\poly(r,|\log\e|)$ such maps we can cover
$I_\e:=(\e,1-\e)$. Finally, since $F_\lambda$ is $1$-Lipschitz for
every $\lambda\in\Lambda$ we have $F_\lambda(I_\e)\sube F_\lambda(I)$
so this covering satisfies the conditions of Lemma~\ref{lem:F-lem}.

We record a corollary of the proof above for later use, where we cover
the domain $I$ by linear maps but skip the $1$-Lipschitz preparation
step, so we only get an $\e$-cover of the domain $I$ but not
necessarily of the image under $F$.

\begin{Cor}\label{cor:F1-affine}
  Let $r\in\N$ and $\e>0$. Let $F:\Lambda\times I\to I^k$ with
  $F_j\in\Omega_{\cF,D}$ for $j=1,\ldots,k$. Then there exists a
  collection $\{\phi_\eta:\Lambda\times I\to I\}$ such that for every
  $\lambda\in\Lambda$ we have i)
  $\norm{F_\lambda\circ \phi_{\eta,\lambda}}_r\le 1$, ii)
  $\cup_\eta \phi_{\eta,\lambda}(I)\sube I$, iii)
  $\phi_{\eta,\lambda}$ is affine, and iv)
  \begin{equation}
    \#\{\phi_\eta\} \le \poly_\cF(D,r,k,|\log\e|), \qquad \forall\eta: \phi_\eta\in\Omega_{O_\cF(1),\poly_\cF(D,r)}.
  \end{equation}
\end{Cor}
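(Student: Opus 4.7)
\textbf{Proof proposal for Corollary~\ref{cor:F1-affine}.} The plan is to re-run the proof of the $n=1$ case of Lemma~\ref{lem:F-lem} given above, with exactly one modification: skip the initial invocation of Lemma~\ref{lem:full-C1} that was used to reduce to the $1$-Lipschitz case. Everything else in that proof already produces affine reparametrizations, so we just need to check that the remaining steps go through and that the weakening from an $\e$-cover of $\Im F_\lambda$ to an $\e$-cover of $I$ is precisely what this omission costs us.

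First I would perform a cell decomposition of $\Lambda\times I$ compatible with the zero sets of $F_i^{(j)}$ for $i=1,\ldots,k$ and $j=0,\ldots,r+1$ (together with the loci where these derivatives are undefined). By sharp cell decomposition this gives $\poly_\cF(D,k,r)$ cells, each in $\Omega_{O_\cF(1),\poly_\cF(D,r)}$. The degenerate cells $C\odot\{a(\lambda)\}$ contribute nothing to the $\e$-cover of $I$ (they live over a finite set of points after projection to $I$, which is swallowed by enlarging $\e$ by a bounded factor). For a cell $C\odot(a(\lambda),b(\lambda))$ I pull back by the affine rescaling $\psi_\lambda(t):=a(\lambda)+(b(\lambda)-a(\lambda))t$, which lands in $(a(\lambda),b(\lambda))\subset I$; since $b(\lambda)-a(\lambda)\le 1$ derivatives only decrease, and $F_\lambda\circ\psi_\lambda:I\to I^k$ has constant-sign derivatives up to order $r+1$.

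At this point Lemma~\ref{lem:discrete-Cr} applies verbatim to $F_\lambda\circ\psi_\lambda$. As in the original proof, for an affine translation $\tau(t)=t_0+\ell t$ with $\tau(I)\subset(r\ell,1-r\ell)$ one takes $M=1/\ell$ and computes
\begin{equation}
|(F_\lambda\circ\psi_\lambda\circ\tau)^{(j)}(t)|\le M^j\cdot\ell^j=1\qquad(j=0,\ldots,r).
\end{equation}
The composition $\phi_{\eta,\lambda}:=\psi_\lambda\circ\tau_\eta$ is then a composition of two affine maps, hence affine, with $\phi_{\eta,\lambda}(I)\subset(a(\lambda),b(\lambda))\subset I$ and $\norm{F_\lambda\circ\phi_{\eta,\lambda}}_r\le 1$. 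A standard geometric-progression covering with lengths $\ell_m=2^m\e/r$, $m=0,1,\ldots,\lceil\log_2(r/\e)\rceil$, uses $O(r\log(1/\e))=\poly(r,|\log\e|)$ affine translations $\tau_\eta$ to cover $I_\e=(\e,1-\e)$ from inside; rescaling by $\psi_\lambda$ covers $\psi_\lambda(I_\e)$ inside $(a(\lambda),b(\lambda))$.

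To finish I would collect these maps over all cells, and verify that the total collection $\e$-covers $I$: within each cell the image $\cup_\eta\phi_{\eta,\lambda}(I)$ is an $\e(b(\lambda)-a(\lambda))$-cover of $(a(\lambda),b(\lambda))$, and the $\poly_\cF(D,r,k)$ cell boundaries in $I$ contribute only a uniformly bounded number of missed points. Replacing $\e$ by $\e/N$ with $N$ the number of cells (which is polynomial in the allowed parameters, so this is harmless for the final count) yields a genuine $\e$-cover. The format and degree bounds for each $\phi_\eta$ follow from \so-minimality, since $\phi_\eta$ is explicitly built from the cell data $a(\lambda),b(\lambda)$ and the (explicit) affine translations $\tau_\eta$. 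The only potential obstacle worth flagging is bookkeeping across cell boundaries in the $\e$-cover condition, but as just noted this is absorbed by a $\poly$ rescaling of $\e$, so no honest difficulty arises beyond what was already handled in the proof of Lemma~\ref{lem:F-lem} for $n=1$.
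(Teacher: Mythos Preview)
Your proposal is correct and follows exactly the approach the paper intends: the corollary is stated in the paper as an immediate byproduct of the $n=1$ proof of Lemma~\ref{lem:F-lem}, obtained by simply omitting the initial Lemma~\ref{lem:full-C1} step (the $1$-Lipschitz preparation) and observing that all remaining reparametrizations are affine. Your detailed verification---the cell decomposition, the affine rescaling to each cell, the application of Lemma~\ref{lem:discrete-Cr}, and the geometric-progression covering of $I_\e$---just spells out what the paper leaves implicit, and your bookkeeping across cell boundaries is slightly more cautious than necessary (the $\e$-cover of $I$ already holds without shrinking $\e$, since each rescaled $\e$-gap has width at most $\e(b(\lambda)-a(\lambda))\le\e$), but this causes no harm.
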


\subsection{Reduction to bounded $y$-derivatives}

We now continue with the case of general $n$, assuming that the case
$n-1$ is already established.

Apply the inductive hypothesis to
$F:(\Lambda\times I_x)\times I_y^{n-1}\to I^k$ with $\e/2$ in place of
$\e$, viewing $x$ as an additional parameter. Let $\{\phi_\eta\}$ be
the resulting collection. It will suffice to establish the conclusion
of Lemma~\ref{lem:F-lem} with $\e/2$ in place of $\e$ and each
$F\circ(\lambda,x,\phi_\eta)$ in place of $F$. In other words, without
loss of generality it suffices to prove Lemma~\ref{lem:F-lem} assuming
that $\norm{F_\lambda(x,\cdot)}_r\le 1$ for every $\lambda\in\Lambda$
and every $x\in I$. Below we keep working with $\e$ rather than $\e/2$
to simplify notations, and we make similar reductions later in the
proof.

\subsection{Reduction to $1$-Lipschitz}
\label{sec:Lipschitz-reduction}

Recall the following lemma from \cite{me:yg-revisited}.

\begin{Lem}[\protect{\cite{me:yg-revisited}}] \label{lem:uniforly-bdd-Fx}
  Let $F:I_x\times I^{n-1}_y\to I$ be definable and suppose that for
  each $i=1,\ldots,n-1$ the derivative $f_{y_i}(x,y)$ is uniformly
  bounded over all $y$ where it is defined, for almost every
  $x\in I_x$. Then $f_x(x,y)$ is also uniformly bounded for all $y$
  where it is defined, for almost every $x\in I_x$.
\end{Lem}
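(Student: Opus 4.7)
The plan is to argue by contradiction via a total-variation estimate along a definable section of the blow-up locus of $f_x$. Set $E := \{x \in I_x : \sup_y |f_x(x,y)| = +\infty\}$. This set is definable, and by axiom~(4) of \so-minimality it is a finite union of intervals and points. Suppose for contradiction that $E$ has positive measure; then $E$ contains an open sub-interval $J \subset I_x$. The hypothesis says the definable function $L(x) := \sup_y \max_i |f_{y_i}(x,y)|$ is finite on a full-measure set, so (shrinking $J$ once more) I may assume $L(x) \le L$ is uniformly bounded on $J$ by some constant $L$.

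For each $M \gg 1$, the definable set $S_M := \{(x,y) \in J \times I^{n-1} : |f_x(x,y)| \ge M\}$ projects onto all of $J$. By definable choice, pick a definable section $y_M : J \to I^{n-1}$ of $S_M$ and form $H_M(x) := F(x, y_M(x))$. Since $F$ is bounded and the family $\{H_M\}_M$ is uniformly definable, axiom~(4) bounds the number of monotone pieces of $H_M$ by a constant $N$ independent of $M$, giving $\int_J |H_M'(x)|\, dx \le 2N$. On the other hand, the chain rule yields
\[
H_M'(x) = f_x(x, y_M(x)) + \sum_i f_{y_i}(x, y_M(x))\, y_{M,i}'(x),
\]
where $|f_x(x, y_M(x))| \ge M$ while $|f_{y_i}(x, y_M(x))| \le L$. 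If one can choose the section with $\|y_M'\|_\infty \le C$ uniformly in $M$, then $|H_M'(x)| \ge M - CL$, and integrating yields $(M - CL)|J| \le 2N$, contradicting the freedom to take $M$ arbitrarily large.

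The principal obstacle is thus producing such a section. A cylindrical cell decomposition of $J \times I^{n-1}$ adapted to the definable family $\{S_M\}_M$ reduces the problem, on each cell, to two situations: (i) the $x$-fibers are open boxes, in which case $y_M$ may be taken locally constant in $x$ and the cross term vanishes; (ii) the fibers collapse to a lower-dimensional definable set, forcing $y_M$ to follow a curve with possibly unbounded derivative. Handling case~(ii) is the technical crux: either one refines further and reparameterizes $J$ by a piecewise bi-Lipschitz change of variable that absorbs the blow-up of $y_M'$, or one proceeds by induction on $n$ by applying the one-variable precursor of the statement in the $y$-direction transverse to which the curve is monotone (exchanging the roles of $x$ and some $y_j$ in the integration argument). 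Keeping the uniform bound $N$ on the number of monotone pieces alive through this refinement is the most delicate point of the argument.
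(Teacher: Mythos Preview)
Your framework is right and matches the paper's: pick a definable section $y_M$ where $|f_x|\ge M$, write down the chain-rule identity for $H_M(x)=F(x,y_M(x))$, and derive a contradiction from the boundedness of $H_M$. But your write-up is explicitly incomplete at the one point that matters---controlling $\|y_M'\|$ uniformly in $M$---and the two routes you sketch for case~(ii) (bi-Lipschitz reparametrization of $J$, or induction on $n$) are not carried out and do not obviously work as stated.

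The paper closes this gap in one line, and the idea is already present in your own argument for $H_M$. You correctly note that the family $\{H_M\}_M$ is \emph{uniformly definable} in $M$, so the number of monotone pieces of $H_M$ is bounded independently of $M$. The same observation applies to the section itself: $y_M$ is chosen by definable choice from a set whose defining formula involves $M$ only as a parameter, so the format and degree of $y_M$ are bounded independently of $M$. Now apply the one-dimensional $C^1$-parametrization (Corollary~\ref{cor:F1-affine}, or equivalently Lemma~\ref{lem:full-C1}) to the tuple $\bigl(y_M,\,F(x,y_M(x))\bigr)$ with $r=1$. The number of affine pieces is bounded independently of $M$, so the longest piece gives a sub-interval $I'\subset J$ of length bounded below uniformly in $M$ on which both $\|y_M'\|$ and $|H_M'|$ are bounded by a constant independent of $M$. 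On $I'$ the chain-rule identity then reads
\[
M \;\le\; |f_x(x,y_M(x))| \;=\; \Bigl|H_M'(x)-\textstyle\sum_i f_{y_i}(x,y_M(x))\,y_{M,i}'(x)\Bigr| \;\le\; C,
\]
a contradiction for $M\gg1$.

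So there is no need for the cell-decomposition dichotomy on $S_M$, no induction on $n$, and no integration: the missing ingredient is simply to feed $y_M$ (not just $H_M$) into the one-variable $C^1$-lemma, exploiting that its complexity is uniform in $M$. Your total-variation bound $\int_J|H_M'|\le 2N$ is a correct alternative to bounding $|H_M'|$ pointwise, but it becomes redundant once you control $y_M'$ this way.
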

\begin{proof}
  This is essentially \cite[Lemma~16]{me:yg-revisited}, but we sketch
  the argument to illustrate the connection with the material
  above. Suppose toward contradiction that $f_x(x,y)$ is unbounded in
  $y$ for every $x\in J\subset I$. Without loss of generality assume
  $J=I$. Then by definable choice one can choose a function
  $\gamma:I\to I^{n-1}$ such that each $f_{y_i}(x,\gamma(x))$ is
  defined and $f_x(x,\gamma(x))>M$ for every $x\in I$ (or
  $f_x(x,\gamma(x))<-M$, which is analogous). Moreover the format and
  degree of $\gamma$ are bounded independently of $M$. By
  Corollary~\ref{cor:F1-affine} applied to $\gamma$ and
  $f(x,\gamma(x))$ with $r=1$ and say $\e=0.1$ we find a subinterval
  $I'\subset I$ where both $\norm{\gamma'(x)}$ and $|f(x,\gamma(x))'|$
  are bounded from above by a constant independent of $M$. One can
  take the longest of the intervals $\phi_\eta(I)$ for example, which
  has length bounded from below uniformly in $M$. This is now a
  contradiction for $M\gg1$ because
  \begin{equation}
    M<f_x(x,\gamma(x)) = f(x,\gamma(x))'-\sum_{i=1}^{n-1} f_{y_i}(x,\gamma(x)) \gamma_i'(x)
  \end{equation}
  and the right-hand side is uniformly bounded.
\end{proof}

For $i=1,\ldots,k$ we define $S_i\subset\Lambda\times I^n$ by
\begin{equation}
  S_i:=\big\{(\lambda,x,y) : |(F_i)_x(\lambda,x,y)| \ge \frac12 \sup_{y'\in I^{n-1}}|(F_i)_x(\lambda,x,y')| \big\}
\end{equation}
where the supremum is taken over the points where
$(F_i)_x(\lambda,x,y')$ is defined. By
Lemma~\ref{lem:uniforly-bdd-Fx}, the supremum is finite, for each
$\lambda\in\Lambda$, for almost every $x$. For $x$ where the supremum
is infinite we consider that the condition is vacuous, i.e. every
$(\lambda,x,y)$ is included in $S_i$ in this case. Clearly
$S_i\in\Omega_{O_\cF(1),\poly_\cF(D)}$.

By definable choice we may choose subsets $\Gamma_i\subset S_i$ such
that $\Gamma_i$ contains exactly one $(\lambda,x,y)$ for every
$(\lambda,x)$. In particular, sharp cylindrical decomposition shows
that $\Gamma_i\in\Omega_{O_\cF(1),\poly_\cF(D)}$. By definition
$\Gamma_i$ is a graph of an $(n-1)$-tuple of functions
\begin{equation}
  \gamma_{i,1},\ldots,\gamma_{i,n-1}:\Lambda\times I_x\to I
\end{equation}
and $\gamma_{i,j}\in\Omega_{O_\cF(1),\poly_\cF(D)}$ as well.

We apply Lemma~\ref{lem:full-C1} to the tuple including the functions
$\gamma_i$ and $x$, as well as $F\circ(\lambda,x,\gamma_i)$ for every
$i=1,\ldots,k$. For every $\phi_\eta$ thus obtained let
\begin{equation}
  \Phi_\eta(\lambda,t,y)=(\lambda,\phi_\eta(\lambda,t),y).
\end{equation}
Denote $F_\eta:=F\circ\Phi_\eta$. It will suffice to establish the
conclusion of Lemma~\ref{lem:F-lem} for each $F_\eta$ in place of
$F$. Moreover we obviously still have
$\norm{F_{\eta,\lambda}(t,\cdot)}_r\le 1$ for every
$\lambda\in\Lambda$ and every $t\in I$. Denote
$\Gamma_{i,\eta}:=\Phi_\eta^{-1}(\Gamma_i)$. Then $\Gamma_{i,\eta}$ is
the graph of an $(n-1)$-tuple of functions
$\gamma_{i,j,\eta}:=\gamma_{i,j}\circ(\lambda,\phi_\eta)$ with
$\norm{\gamma_{i,j,\eta,\lambda}}_1\le 1$ for every
$\lambda\in\Lambda$. Note that for every
$(\lambda,t,y)\in\Gamma_{i,\eta}$ we have
\begin{multline}\label{eq:Gamma-i-cond}
  |(F_{i,\eta})_t(\lambda,t,y)| = |(F_i)_x\circ\Phi_\eta(\lambda,t,y)\cdot (\phi_\eta)_t(\lambda,t)| \ge\\
  \frac12 \sup_{y'\in I^{n-1}} |(F_i)_x(\lambda,\phi_\lambda(t),y')\cdot (\phi_\eta)_t(\lambda,t)| =\\
  \frac12 \sup_{y'\in I^{n-1}} |(F_{i,\eta})_t(\lambda,t,y')|.
\end{multline}
In other words, $\Gamma_{i,\eta}$ satisfies the same definition in
the $(\lambda,t,y)$ coordinates as $\Gamma_i$ in the $(\lambda,x,y)$
coordinates. Clearly the sets and functions defined above have format
$O_\cF(1)$ and degree $\poly_\cF(D)$.

We claim that $|(F_{i,\eta})_t|=O_n(1)$ whenever it is
defined. According to~\eqref{eq:Gamma-i-cond} it is enough to
check this on the curves $\Gamma_{i,\eta}$. We compute the
derivative of $F_{i,\eta}$ along this curve,
\begin{multline}
  1 \ge |\big(F_{i,\eta}(\lambda,t,\gamma_{i,1,\eta,\lambda},\ldots,\gamma_{i,n-1,\eta,\lambda})\big)_t| = \\
  |(F_{i,\eta})_t+\sum_{j=1}^{n-1} (F_{i,\eta})_{y_j} (\gamma_{i,j,\eta,\lambda})_t| \ge
  |(F_{i,\eta})_t|- n-1,
\end{multline}
and rearranging we see that $|(F_{i,\eta})_t|=O_n(1)$ as claimed.

According to the following lemma, each $F_\eta(\lambda,\cdot)$
is $O_n(1)$-Lipschitz for each $\lambda\in\Lambda$ after a subdivision of
$I_x$ into intervals.

\begin{Lem}
  Let $f:I_x\times I_y^{n-1}\to I$ be definable with
  $\norm{f(x,\cdot)}_1\le 1$ everywhere and $|f_x|\le 1$ whenever
  $f_x$ is defined. Then the locus of discontinuity of $f$ is
  contained in finitely many hyperplanes $x=x_1,\ldots,x_N$, and $f$
  is $O_n(1)$-Lipschitz on each $(x_i,x_{i+1})\times I^{n-1}_y$ (where we
  take $x_1=0$ and $x_N=1$).
\end{Lem}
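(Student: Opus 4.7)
The plan is first to show that the discontinuities of $f$ lie on finitely many hyperplanes $x=x_i$, and then to upgrade the pointwise (almost everywhere) bound on $\nabla f$ to a genuine Lipschitz bound on each open strip between them.

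Let $\bar D\subset I_x\times I_y^{n-1}$ be the discontinuity locus of $f$; this is definable, and by o-minimal cell decomposition $\dim\bar D<n$. The first key point is that every fiber $\bar D_x=\{y:(x,y)\in\bar D\}$ is an \emph{open} subset of $I_y^{n-1}$. Indeed, if $f$ is discontinuous at $(x,y)$ as witnessed by a sequence $(x_k,y_k)\to(x,y)$ with $|f(x_k,y_k)-f(x,y)|>\e$, then for every $y'$ sufficiently close to $y$ the translated sequence $(x_k,y_k+y'-y)\to(x,y')$ also witnesses discontinuity at $(x,y')$, using $\norm{f(x,\cdot)}_1\le1$ to bound the perturbation in the $y$-direction. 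A non-empty open fiber has dimension $n-1$, so the o-minimal fiber-dimension theorem applied to $\bar D$ forces the set of $x$ with $\bar D_x\ne\emptyset$ to be finite. Adjoining $\{0,1\}$ produces the desired $x_1<\cdots<x_N$, and on each strip $S_i:=(x_i,x_{i+1})\times I_y^{n-1}$ the function $f$ is continuous.

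For the Lipschitz bound on $S_i$, take $p=(a,\alpha)$ and $q=(b,\beta)$ in $S_i$ with $a\le b$. The estimate $|f(b,\alpha)-f(b,\beta)|=O_n(1)\cdot\norm{\alpha-\beta}$ is immediate from $\norm{f(x,\cdot)}_1\le1$. For the $x$-move from $(a,\alpha)$ to $(b,\alpha)$, let $Z$ denote the definable non-$C^1$ locus of $f$, so $\dim Z<n$. The fiber-dimension theorem applied to the projection $Z\to I_y^{n-1}$ shows that $\{y':Z\cap(I_x\times\{y'\})\text{ is infinite}\}$ has dimension $<n-1$, so we may choose $\alpha'$ arbitrarily close to $\alpha$ with $Z\cap((a,b)\times\{\alpha'\})$ finite. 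For such $\alpha'$, the one-variable restriction $h_{\alpha'}(t):=f(t,\alpha')$ is continuous on $(a,b)$ (inherited from $f\rest{S_i}$) and $C^1$ off a finite set with $|h_{\alpha'}'|\le1$; it is therefore absolutely continuous, yielding $|h_{\alpha'}(b)-h_{\alpha'}(a)|\le b-a$. Letting $\alpha'\to\alpha$ and invoking continuity of $f$ on $S_i$ gives $|f(b,\alpha)-f(a,\alpha)|\le b-a$, and combining with the $y$-move estimate proves $|f(p)-f(q)|=O_n(1)\cdot\norm{p-q}$.

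The main obstacle is that $|f_x|\le1$ is assumed only where $f_x$ is defined, so a priori $f$ could jump wildly across the non-differentiability locus. The propagation argument of the second paragraph resolves this: any discontinuity is forced by the $y$-Lipschitz hypothesis to propagate to an open set of $y$-values in the same $x$-slice, and the dimension constraint on $\bar D$ then confines all such $x$-slices to a finite list. On each strip the a.e. gradient bound is then converted into a genuine Lipschitz bound by restricting to generic lines avoiding $Z$, where the one-dimensional theory (absolute continuity of a continuous, piecewise-$C^1$ function with bounded derivative) applies cleanly.
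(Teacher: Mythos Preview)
Your proof is correct and follows essentially the same approach as the paper's: both propagate discontinuity in the $y$-direction via the $y$-Lipschitz hypothesis to conclude that the discontinuity locus sits over finitely many $x$-values, and both upgrade the almost-everywhere gradient bound to a genuine Lipschitz estimate by perturbing to a generic line that meets the non-smooth locus finitely often and then passing to the limit by continuity. The only cosmetic difference is that the paper perturbs the direct segment between the two points, whereas you decompose into an $x$-move and a $y$-move and perturb only the $y$-coordinate for the $x$-move.
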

\begin{proof}
  Since $f$ is $O_n(1)$-Lipschitz in the $y$-direction, it is easy to check
  that if $f$ is discontinuous at $(x_0,y_0)$ it is also discontinuous
  at every point of $\{x_0\}\times U_\delta(y_0)$ for some
  $\delta\ll1$. Since the locus of discontinuity has empty interior,
  the first claim follows.

  Let $V\subset(x_i,x_{i+1})\times I^{n-1}_y$ denote the locus where
  $f_x$ is undefined. Consider two points in
  $(x_i,x_{i+1})\times I^{n-1}_y$ and the straight line $\gamma$
  connecting them. If $\gamma\cap V$ is finite then $f\rest\gamma$ is
  piecewise $O_n(1)$-Lipschitz and continuous, so it is $1$-Lipschitz. In
  general, we deform $\gamma$ into a curve $\gamma+v$ with
  $v\in\R^n$. For the same reason as above, we can choose $\norm{v}$
  arbitrarily small so that $(\gamma+v)\cap V$ is finite, and $f$ is
  $O_n(1)$-Lipschitz on $\gamma+v$. The claim follows by continuity of
  $f$.
\end{proof}

Perform a cell decomposition of $\Lambda\times I_x$ compatible with
the projections of the loci of discontinuity of $F_{i,\eta}$ for each
$i=1,\ldots,k$ to $\Lambda\times I_x$, giving cells of the form either
$C=C_\lambda\odot\{a(\lambda)\}$ or
$C=C_\lambda\odot(a(\lambda),b(\lambda))$ where in the latter case
$F_{\eta,\lambda}$ is $O_n(1)$-Lipschitz in
$(a(\lambda),b(\lambda))\times I^{n-1}$ for every
$\lambda\in\Lambda$. In the former case we can handle
$F_\eta\rest{C\times I^{n-1}}$ by the inductive hypothesis. In the
latter case, rescaling $(a(\lambda),b(\lambda))$ back to $(0,1)$ only
improves the Lipschitz constant in
$F_{\eta,\lambda}\rest{C\times I^{n-1}}$. Applying a further linear
subdivision in the $x,y$ coordinates we may further reduce the
Lipschitz constant to $1$ to simplify our notations.

Returning to the original notation, we conclude that it will suffice
to establish the conclusion of Lemma~\ref{lem:F-lem} assuming that
$\norm{F_\lambda(x,\cdot)}_r\le 1$ for every $\lambda\in\Lambda$ and
every $x\in I$, and $F_\lambda$ is $1$-Lipschitz for every $\lambda$.

\subsection{Controlling higher derivatives}
\label{sec:higher-der}

We start off similarly to~\secref{sec:Lipschitz-reduction}. For
$i=1,\ldots,k$ and $\alpha\in\Z_{\ge0}^n$ with $|\alpha|\le r$ we
define $S_{i,\alpha}\subset\Lambda\times I^n$ by
\begin{equation}
  S_{i,\alpha}:=\big\{(\lambda,x,y) : |F_i^{(\alpha)}(\lambda,x,y)| \ge
  \frac12 \sup_{y'\in I^{n-1}}|F_i^{(\alpha)}(\lambda,x,y')| \big\}
\end{equation}
where the supremum is restricted to those points where
$F_i^{(\alpha)}(\lambda,x,y')$ is defined. Repeatedly applying
Lemma~\ref{lem:uniforly-bdd-Fx} and using the fact that for
$\alpha_1=0$ all the $F^{(\alpha)}$ are defined and bounded
everywhere, we again conclude that the supremum is finite, for each
$\lambda\in\Lambda$, for almost every $x$. For $x$ where the supremum
is infinite we consider that the condition is vacuous, i.e. every
$(\lambda,x,y)$ is included in $S_{i,\alpha}$ in this case. By the
sharp derivatives condition one sees that
$S_{i,\alpha}\in\Omega_{O_\cF(1),\poly_\cF(D,r)}$.

By definable choice we may choose subsets
$\Gamma_{i,\alpha}\subset S_{i,\alpha}$ such that $\Gamma_{i,\alpha}$
contains exactly one $(\lambda,x,y)$ for every $(\lambda,x)$. In
particular, sharp cylindrical decomposition shows that
$\Gamma_{i,\alpha}\in\Omega_{O_\cF(1),\poly_\cF(D,r)}$. By definition
$\Gamma_{i,\alpha}$ is a graph of an $(n-1)$-tuple of functions
$\gamma_{i,\alpha}$ given by
\begin{equation}
  \gamma_{i,\alpha,1},\ldots,\gamma_{i,\alpha,n-1}:\Lambda\times I_x\to I
\end{equation}
and $\gamma_{i,\alpha,j}\in\Omega_{O_\cF(1),\poly_\cF(D,r)}$ as well.

We apply Corollary~\ref{cor:F1-affine} to the tuple including the
functions $\gamma_{i,\alpha}$ as well as
$F^{(\beta)}\circ(\lambda,x,\gamma_{i,\alpha})$ for every
$i=1,\ldots,k$, every $\alpha\in\Z_{\ge0}^n$ satisfying
$|\alpha|\le r$, and every $\beta\in\Z_{\ge0}^n$ satisfying
$|\beta|\le r$ and $\beta_1=0$. Note that $F^{(\beta)}$ is indeed
bounded by $1$ according to our previous steps, so the corollary
applies. For every $\phi_\eta$ thus obtained let
\begin{equation}
  \Phi_\eta(\lambda,t,y)=(\lambda,\phi_\eta(\lambda,t),y).
\end{equation}
Denote $F_\eta:=F\circ\Phi_\eta$. It will suffice to establish the
conclusion of Lemma~\ref{lem:F-lem} for each $F_\eta$ in place of $F$:
indeed, $\cup_\eta\phi_{\eta,\lambda}(I)\sube I$ for every
$\lambda\in\Lambda$, and since $F_\lambda$ is $1$-Lipschitz for every
$\lambda$ it follows that
$\cup_\eta F_\lambda\circ\Phi_{\eta,\lambda}(I^n)\sube F_\lambda(I^n)$. Moreover
we obviously still have $\norm{F_{\eta,\lambda}(t,\cdot)}_r\le 1$ for
every $\lambda\in\Lambda$ and every $t\in I$.

Denote
$\Gamma_{i,\alpha,\eta}:=\Phi_\eta^{-1}(\Gamma_{i,\alpha})$. Then
$\Gamma_{i,\alpha,\eta}$ is the graph of an $(n-1)$-tuple
$\gamma_{i,\alpha,\eta}:=\gamma_{i,\alpha}\circ(\lambda,\phi_\eta)$ of
functions
$\gamma_{i,\alpha,j,\eta}:=\gamma_{i,\alpha,j}\circ(\lambda,\phi_\eta)$
with $\norm{\gamma_{i,\alpha,\eta,\lambda}}_r\le 1$ for every
$\lambda\in\Lambda$. Denote $\pd{}t(\cdot)=(\cdot)'$ and recall that
\begin{equation}\label{eq:phi-eta-affine}
  \phi_\eta''=0
\end{equation}
since these maps are affine in $t$. Then for every
$(\lambda,t,y)\in\Gamma_{i,\alpha,\eta}$ we have
\begin{multline}\label{eq:Gamma-i-alpha-cond}
  |F_{i,\eta}^{(\alpha)}(\lambda,t,y)| = |F_i^{(\alpha)}\circ\Phi_\eta(\lambda,t,y)\cdot (\phi_\eta')^{\alpha_1}(\lambda,t)| \ge\\
  \frac12 \sup_{y'\in I^{n-1}} |F_i^{(\alpha)}(\lambda,\phi_\lambda(t),y')\cdot (\phi_\eta')^{\alpha_1}(\lambda,t)| =\\
  \frac12 \sup_{y'\in I^{n-1}} |F_{i,\eta}^{(\alpha)}(\lambda,t,y')|.
\end{multline}
In other words, $\Gamma_{i,\alpha,\eta}$ satisfies the same definition
in the $(\lambda,t,y)$ coordinates as $\Gamma_{i,\alpha}$ in the
$(\lambda,x,y)$ coordinates. Note that~\eqref{eq:phi-eta-affine} was
crucial at this point: otherwise we would get numerous additional
terms involving mixed derivatives of $F_i$ and $\phi_\eta$. As before,
the sets and functions defined above have format $O_\cF(1)$ and degree
$\poly_\cF(D,r)$.

\begin{Lem}\label{lem:main-inductive-computation}
  Let $\lambda\in\Lambda$. Let $\Gamma$ be one of
  $(\Gamma_{i,\alpha,\eta})_\lambda$ and $\gamma:I\to\Gamma$ be the
  tuple $\gamma_{i,\alpha,\eta,\lambda}$. Let $G=F_{l,\eta,\lambda}$
  for some $l=1,\ldots,k$. Then for each $\beta\in\Z_{\ge0}^n$ with
  $|\beta|\le r$ we have
  \begin{equation}\label{eq:G-norm-est}
    \norm{G^{(\beta)}\circ(t,\gamma)}_{T,r-\beta_1} \le E(\beta_1), \qquad E(\beta_1):=e\cdot(r+(n-1)e)^{\beta_1}.
  \end{equation}
\end{Lem}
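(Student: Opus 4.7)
The plan is to prove the bound \eqref{eq:G-norm-est} by induction on $\beta_1$, leveraging the one-variable $T$-norm together with the submultiplicativity and subcompositionality of Lemma~\ref{lem:submul-subcomp}.

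For the base case $\beta_1=0$, the map $\phi_\eta$ enters $G=F_{l,\eta,\lambda}$ only through the $x$-slot, so purely $y$-derivatives give
\begin{equation*}
  G^{(\beta)}(t,\gamma(t)) = F_l^{(\beta)}(\phi_\eta(\lambda,t),\gamma_{i,\alpha,\eta,\lambda}(t)),
\end{equation*}
which is precisely one of the functions to which Corollary~\ref{cor:F1-affine} was applied in~\secref{sec:higher-der} (it is the pullback of $F^{(\beta)}\circ(\lambda,x,\gamma_{i,\alpha})$ by $\phi_\eta$). Thus its $r$-norm is at most $1$, and the one-variable instance of \eqref{eq:Tr-less-T} yields $\norm{G^{(\beta)}\circ(t,\gamma)}_{T,r}\le e = E(0)$.

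For the inductive step, fix $\tilde\beta$ with first coordinate $\beta_1^*$ and $|\tilde\beta|+1\le r$, and set $h(t):=G^{(\tilde\beta)}(t,\gamma(t))$; the inductive hypothesis applied to $\tilde\beta$ gives $\norm{h}_{T,r-\beta_1^*}\le E(\beta_1^*)$. The ordinary chain rule yields
\begin{equation*}
  G^{(\tilde\beta+e_1)}(t,\gamma(t)) = h'(t) - \sum_{j=1}^{n-1} G^{(\tilde\beta+e_{j+1})}(t,\gamma(t))\cdot\gamma_j'(t),
\end{equation*}
and every multi-index $\tilde\beta+e_{j+1}$ with $j\ge 1$ still has first coordinate $\beta_1^*$, so by the inductive hypothesis and the monotonicity of $T$-norms in the order we have $\norm{G^{(\tilde\beta+e_{j+1})}\circ(t,\gamma)}_{T,r-\beta_1^*-1}\le E(\beta_1^*)$. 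Taking $(T,r-\beta_1^*-1)$-norms, applying the triangle inequality and Lemma~\ref{lem:submul-subcomp}(i), and using the jet-wise estimate $\norm{h'}_{T,r-\beta_1^*-1}\le(r-\beta_1^*)\norm{h}_{T,r-\beta_1^*}$ (derivative of a degree-$(r-\beta_1^*)$ polynomial), we arrive at
\begin{equation*}
  \norm{G^{(\tilde\beta+e_1)}\circ(t,\gamma)}_{T,r-\beta_1^*-1} \le (r-\beta_1^*)E(\beta_1^*) + (n-1)\cdot\norm{\gamma_j'}_{T,r-\beta_1^*-1}\cdot E(\beta_1^*).
\end{equation*}

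The main obstacle — the only numerically delicate point — is the bound $\norm{\gamma_j'}_{T,r-\beta_1^*-1}\le e$. This must be extracted \emph{directly} from $|\gamma_j^{(k)}|\le 1$ for $k\le r$ by summing $\sum_{k\ge 0}1/k!$ in the jet expansion of $\gamma_j'$; the coarser bound one would get by differentiating a $T$-norm estimate of $\gamma_j$ would contribute an extra factor of $r-\beta_1^*$ per inductive step, causing the base of the exponent in $E(\beta_1)$ to blow up to order $r$ rather than $r+O(1)$ and destroying the desired polynomial control later in~\secref{sec:higher-der}. With the sharp estimate in hand the recursion closes:
\begin{equation*}
  (r-\beta_1^*+(n-1)e)\,E(\beta_1^*)\le (r+(n-1)e)\,E(\beta_1^*)=E(\beta_1^*+1),
\end{equation*}
completing the induction.
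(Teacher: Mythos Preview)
Your proof is correct and follows the same approach as the paper: induction on $\beta_1$, with the base case supplied by Corollary~\ref{cor:F1-affine} together with~\eqref{eq:Tr-less-T}, and the inductive step obtained from the chain-rule identity for $(G^{(\tilde\beta)}\circ(t,\gamma))'$ combined with the submultiplicativity of Lemma~\ref{lem:submul-subcomp} and the estimate $\norm{\gamma_j'}_{T,\cdot}\le e$ coming from $\norm{\gamma_j}_r\le1$. Your bound $\norm{h'}_{T,r-\beta_1^*-1}\le(r-\beta_1^*)\norm{h}_{T,r-\beta_1^*}$ is in fact slightly sharper than the paper's $rE(\beta_1-1)$, but both close the recursion to $E(\beta_1^*+1)$.
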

\begin{proof}
  We will work by induction on $\beta_1$. For $\beta_1=0$ the
  estimate~\eqref{eq:G-norm-est} follows from the fact that
  $F^{(\beta)}\circ(\lambda,x,\gamma_{i,\alpha})$ was included in the
  tuple of functions to which Corollary~\ref{cor:F1-affine} was
  applied. Indeed,
  $F^{(\beta)}\circ\Phi_\eta=(F\circ\Phi_\eta)^{(\beta)}$ and it
  follows that
  \begin{equation}
    \begin{aligned}
      G^{(\beta)}\circ(t,\gamma) &= (F_l\circ\Phi_\eta)^{(\beta)}\circ(\lambda,t,\gamma_{i,\alpha,\eta,\lambda}) \\
      &= F_l^{(\beta)}\circ\Phi_\eta\circ(\lambda,t,\gamma_{i,\alpha,\eta,\lambda}) \\
      &= F_l^{(\beta)}\circ(\lambda,\phi_\eta(\lambda,t),\gamma_{i,\alpha}\circ\phi_\eta(\lambda,t)) \\
      &= F_l^{(\beta)}\circ(\lambda,x,\gamma_{i,\alpha})\circ\phi_\eta(\lambda,t).
    \end{aligned}
  \end{equation}
  so the $(T,r)$-norm of the left hand side is bounded according to
  Corollary~\ref{cor:F1-affine} and~\eqref{eq:Tr-less-T}.

  Suppose now that~\eqref{eq:G-norm-est} is proved for all $\beta'$
  with $\beta'_1<\beta_1$. Compute
  \begin{equation}\label{eq:G-on-curve}
    (G^{(\beta-e_1)}\circ(t,\gamma))' = G^{(\beta)}\circ(t,\gamma)+\sum_{j=2}^n (G^{(\beta-e_1+e_j)}\circ(t,\gamma))\cdot\gamma_j',
  \end{equation}
  where $e_1,\ldots,e_n\in\Z_{\ge0}^n$ and $e_{i,j}=\delta_{i,j}$. By
  the inductive hypothesis
  \begin{equation}
    \norm{G^{(\beta-e_1)}\circ(t,\gamma)}_{T,r-\beta_1+1}\le E(\beta_1-1).
  \end{equation}
  Thus the left-hand side of~\eqref{eq:G-on-curve} has
  $(T,r-\beta_1)$-norm bounded by $rE(\beta_1-1)$. Similarly
  \begin{equation}
    \norm{G^{(\beta-e_1+e_j)}\circ(t,\gamma))}_{T,r-\beta_1}\le E(\beta_1-1)
  \end{equation}
  by induction and $\norm{\gamma_j'}_{T,r-\beta_1}\le e$ since
  $\norm{\gamma_j}_r\le 1$. Rearranging and using
  Lemma~\ref{lem:submul-subcomp} we conclude that
  \begin{equation}
    \norm{G^{(\beta)}\circ(t,\gamma)}_{T,r-\beta_1} \le r E(\beta_1-1)+(n-1)e E(\beta_1-1)
  \end{equation}
  as claimed.
\end{proof}

Finally we conclude that
$|F_{i,\eta,\lambda}^{(\alpha)}|=O_n(r^{|\alpha|})$ whenever it
is defined, for any $\lambda\in\Lambda$, any $i=1,\ldots,k$ and any
$|\alpha|\le r$.  Indeed, according to~\eqref{eq:Gamma-i-alpha-cond}
it is enough to check this on the curve $\Gamma_{i,\alpha,\eta}$, and
there it holds by Lemma~\ref{lem:main-inductive-computation} taking
$G=F_{i,\eta,\lambda}^{(\alpha)}$ since the $(T,r)$-norm bounds the
maximum norm. A further linear subdivision into cubes of length $1/r$
as in Lemma~\ref{lem:T-less-Tr} then gives
$\norm{F_{i,\eta,\lambda}}_r\le 1$, whenever the derivatives are
defined.

Returning again to the original notation, we conclude that it will
suffice to establish the conclusion of Lemma~\ref{lem:F-lem} assuming
that $F_\lambda$ is $1$-Lipschitz and $\norm{F_\lambda}_r\le 1$ for
every $\lambda\in\Lambda$.

\subsection{Final clean up}

By now we have satisfied the conclusions of Lemma~\ref{lem:F-lem},
except that some derivatives of $F$ may be undefined at some points.
Let $V_{i,\alpha}$ denote the locus where $F_i^{(\alpha)}$ is
undefined, for $i=1,\ldots,k$ and $|\alpha|\le r$.  Perform a cell
decomposition of $\Lambda\times I^n$ compatible with every
$V_{i,\alpha}$ giving $\poly_\cF(D,r,k)$ cells, each in
$\Omega_{O_\cF(1),\poly_\cF(D,r)}$. This induces a cell decomposition
$\{C_\nu\}$ of $\Lambda\times I^{n-1}$.

By Lemma~\ref{lem:alg-lem-family} we get a collection
$\{\phi_{\nu,\eta}:\Lambda\times I^{n-1}\to C_\nu\}$ such that
\begin{equation}
  \cup_\eta(\phi_{\nu,\eta,\lambda}(I^{n-1}))\sube C_{\nu,\lambda}
\end{equation}
and $\norm{\phi_{\nu,\eta,\lambda}}_r\le 1$. Moreover,
\begin{equation}
  \#\{\phi_{\nu,\eta}\} \le \poly_\cF(D,r,|\log\e|), \qquad \forall\nu,\eta: \phi_{\nu,\eta}\in\Omega_{O_\cF(1),\poly_\cF(D,r)}.
\end{equation}
Since $F_\lambda$ is 1-Lipschitz for every $\lambda$ it will be enough
to prove the claim for each pullback $F\circ(\phi_{\nu,\eta},x_n)$,
and we may further restrict to the case where $C_{\nu,\lambda}$ has
full dimension in $I^{n-1}$. Then
$\norm{F_\lambda\circ(\phi_{\nu,\eta,\lambda},x_n)}_r$ is bounded for
every $\lambda$ by Lemma~\ref{lem:submul-subcomp}, and by linear
subdivision as in Lemma~\ref{lem:T-less-Tr} one can return to unit
$r$-norms (and after further subdivision to $1$-Lipschitz).

In other words we may replace $F$ by each
$F\circ(\phi_{\nu,\eta},x_n)$ and assume without loss of generality
that $V:=\cup_{i,\alpha} V_{i,\alpha}$, i.e. the locus of
non-smoothness of $F$, is already given by graphs of functions
$G_1,\ldots,G_q\in\Omega_{O_\cF(1),\poly_\cF(D,r)}$ with
$q=\poly_\cF(D,k,r)$, where $G_i:\Lambda\times I^{n-1}\to I$. We also
may assume $G_1=0$ and $G_q=0$ for simplicity.

Now apply the inductive hypothesis, i.e. Lemma~\ref{lem:F-lem} in
dimension $n-1$, to the tuple $x_1,\ldots,x_{n-1}$ and to
$G_1,\ldots,G_q$ on $\Lambda\times I^{n-1}$. Making the same reduction
as above, we may now assume without loss of generality that
$\norm{G_{1,\lambda}}_r,\ldots,\norm{G_{q,\lambda}}_r\le1$ for every
$\lambda$. We cover
\begin{equation}
  (\Lambda\times I^n) \setminus V = \bigcup_{i=1}^q (\Lambda\times I^{n-1})\odot(G_i,G_{i+1})
\end{equation}
by images of affine maps
\begin{equation}
  (\lambda,x_1,\ldots,x_{n-1},t) \to (\lambda,x_1,\ldots,x_{n-1},tG_{i+1}+(1-t)G_i)
\end{equation}
with similarly bounded $r$-norms, and finally by another pullback step as
above reduce to the case $V=\emptyset$, finishing the proof.

\section{Point counting}

In this section we give the proof of
Theorem~\ref{thm:main-blocks}. Since the argument is fairly standard
by now we focus mostly on the novel parts. The key proposition is the
following.

\begin{Prop}\label{prop:interpolation-step}
  There are appropriate choices of $r,d=\poly_m(g,\log H)$ and
  $\log\e=-\poly_m(g,\log H)$ such that the following holds. Let
  $\phi:I^m\to X$ such that
  $\phi(I^m)\sube X\subset I^{m+1}$ and $\norm{\phi}_r\le1$. Then
  there exists a polynomial $P\in\R[x_1,\ldots,x_{m+1}]\setminus\{0\}$
  of degree $d$ such that $X(g,H)\subset\{P=0\}$.
\end{Prop}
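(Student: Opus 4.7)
The plan is the classical Bombieri--Pila determinant method, combined with the interpolation trick of Wilkie \cite{wilkie:notes} to convert algebraic conditions into rational ones, keeping the $g$-dependence polynomial.

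\emph{Subdivision.} Cover $I^m$ by $\rho^{-m}$ sub-cubes $Q_\nu$ of side $\rho$, where $\rho>0$ is a small fixed constant. It suffices to produce, for each $\nu$, a nonzero $P_\nu\in\R[x_1,\ldots,x_{m+1}]$ of degree $d_0$ vanishing on every $p\in X(g,H)$ that admits an $\e$-preimage $y_p\in Q_\nu$ under $\phi$; the product $P:=\prod_\nu P_\nu$ then has degree $\rho^{-m}d_0=O_m(d_0)$.

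\emph{Linear system over $\Q$.} Fix $\nu$ and list the corresponding points as $p_1,p_2,\ldots$. Set $D:=\binom{m+1+d_0}{m+1}$ and enumerate monomials $x^{\alpha_1},\ldots,x^{\alpha_D}$ of total degree $\le d_0$. Following \cite{wilkie:notes}, pick for each $p_i$ a $\Q$-basis of $\Q(p_i)\subset\bar\Q$ (which has degree $\le g^{m+1}$) of controlled height. Then the single condition $P(p_i)=0$ becomes at most $g^{m+1}$ rational linear conditions on the coefficient vector $(c_\alpha)\in\Q^D$, with matrix $M^{\mathrm{rat}}$ of entries of naive height $\poly(H^{d_0},g)$. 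A non-trivial $P_\nu$ exists iff $\mathrm{rank}(M^{\mathrm{rat}})<D$, equivalently every $D\times D$ minor vanishes.

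\emph{Analytic bound.} Set $f_\alpha(y):=\phi(y)^\alpha$; by Lemma~\ref{lem:submul-subcomp}(i) and $\norm{\phi}_r\le 1$, each $\norm{f_\alpha}_{T,r}\le (e^m)^{d_0}$. Taylor-expand each $f_\alpha$ around the centre of $Q_\nu$ to order $r-1$; the Taylor parts span a space of dimension $S_{m,r}:=\binom{m+r-1}{m}$, so multilinear expansion of $\det(f_{\alpha_k}(y_{i_j}))$ in columns together with Hadamard's inequality yields
\[
 |\det(f_{\alpha_k}(y_{i_j}))_{j,k}|\le c_m^D\,(e^m)^{d_0 D}\,(\rho^r/r!)^{D-S_{m,r}}.
\]
Replacing $\phi(y_{i_j})$ by $p_{i_j}$ and converting to the rational-basis entries of $M^{\mathrm{rat}}$ costs a multiplicative error absorbed by choosing $\log(1/\e)=\poly_m(g,\log H)$.

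\emph{Arithmetic bound and parameter choice.} After clearing denominators, any $D\times D$ minor $\Delta$ of $M^{\mathrm{rat}}$ is either zero or satisfies $|\Delta|\ge H^{-C_m d_0 D\,g^{m+1}}$. Fix $\rho$ small. A direct calculation shows that choosing $d_0,r=\poly_m(g,\log H)$ large enough (so that $D\gg S_{m,r}$, i.e.\ $d_0\gg r^{m/(m+1)}$, and $r(D-S_{m,r})\log(1/\rho)\gg d_0 D\,g^{m+1}\log H$) forces the analytic bound strictly below the arithmetic gap; a routine estimate gives $d_0,r=(g\log H)^{O(m)}$. Every $D\times D$ minor is then forced to vanish, so $\mathrm{rank}(M^{\mathrm{rat}})<D$ and the desired $P_\nu$ exists. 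The main obstacle is the polynomial-in-$g$ arithmetic bound: a naive Liouville inequality for $\Delta$ viewed in the compositum of all $\Q(p_i)$ is doubly exponential in $D$ and unusable, and Wilkie's interpolation to \emph{rational} conditions via a fixed $\Q$-basis of $\Q(p_i)$ \emph{before} forming minors is precisely what keeps the $g$-dependence polynomial.
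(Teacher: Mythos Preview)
Your approach has a genuine gap at the step ``converting to the rational-basis entries of $M^{\mathrm{rat}}$ costs a multiplicative error.'' The analytic upper bound you establish is for $\det(\phi(y_{i_j})^{\alpha_k})$, and perturbing $\phi(y_{i_j})$ to $p_{i_j}$ is indeed an $O(\e)$-error. But the $D\times D$ minors of $M^{\mathrm{rat}}$ are not close to these determinants in any useful sense. Writing $p_i^{\alpha}=\sum_j m_{ij}^\alpha\omega_j^{(i)}$ and inverting via the embeddings $\sigma_1,\ldots,\sigma_{g_i}$ of $\Q(p_i)$ into $\C$, each row $(m_{ij}^{\alpha_l})_l$ of $M^{\mathrm{rat}}$ is a fixed $\C$-linear combination of the rows $(\sigma_k(p_i)^{\alpha_l})_l$. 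Expanding a $D\times D$ minor of $M^{\mathrm{rat}}$ by multilinearity therefore produces a sum of interpolation determinants evaluated at tuples of \emph{conjugate} points $\sigma_k(p_i)$. For $\sigma_k\neq\mathrm{id}$ these are complex and need not lie anywhere near $\phi(I^m)$, so your Taylor-expansion bound gives no control over those terms. In short, passing to the rational matrix destroys precisely the analytic locality that the determinant method exploits: the arithmetic lower bound you state is for minors of $M^{\mathrm{rat}}$, while the analytic upper bound is for minors of the real matrix $(p_{i_j}^{\alpha_k})$, and these are different objects.

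The paper does not use the determinant method for $g>1$ at all, and Wilkie's contribution in \cite{wilkie:notes} is not the ``convert to a $\Q$-basis'' device you describe. Instead one uses Siegel's lemma to construct a \emph{single} $P\in\Z[x_1,\ldots,x_{m+1}]$ of degree $d$ and coefficient height $N$ such that $P(\phi_1,\ldots,\phi_{m+1})$ has all low-order Taylor coefficients equal to zero; the remaining high-order terms make $|P\circ\phi|$ uniformly tiny on $J^m$. Liouville's inequality is then applied to one value $P(p)$ at a time, giving a lower bound of shape $(d^{m+1}NH^{d(m+1)})^{-g}$ when $P(p)\neq0$; because only a single point enters, the exponent is linear in $g$ rather than $g^{D}$, and comparison with the analytic smallness forces $P(p)=0$. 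The $\e$-cover modification is then just a gradient estimate for $P$ on $[0,1]^{m+1}$. This sidesteps the compositum-degree blowup entirely by never forming a determinant in several algebraic points simultaneously.
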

\begin{proof}
  There are two approaches to proving such a statement. The first, due
  to Bombieri-Pila \cite{bp}, is based on interpolation
  determinants. The second, due to Wilkie \cite{wilkie:notes} is based
  on the Siegel lemma. Both of these are normally stated with
  $\phi(I^m)=X$. We briefly show that for both methods the weaker
  assumption $\phi(I^m)\sube X$ is sufficient. After cutting into
  $2^m$ pieces we may assume that the domain of $\phi$ is $J^m$ where
  $J:=(0,1/2)$ instead of $I^m$.

  We start with the interpolation determinant method, which directly
  applies to the case $g=1$, i.e. to $X(\Q,H)$. Recall that an
  \emph{interpolation determinant} is
  \begin{equation}
    \Delta^d(p_1,\ldots,p_\mu) = \det (p_i^\alpha) \quad \text{for }\ {\substack{i=1,\ldots,\mu\\ \alpha\in\Z_{\ge0}^{m+1}, |\alpha|\le d}}
  \end{equation}
  where $p_i\in\R^{m+1}$ and $\mu$ is the dimension of the space of
  polynomials of degree at most $d$ in $m+1$ variables. To prove
  $X(\Q,H)\subset\{P=0\}$ it is enough to show that $\Delta^d(\cdot)$
  vanishes for any $\mu$-tuple of points in $X(\Q,H)$. This is done as
  follows. First, for any such tuple $p$ one estimates the height of
  $\Delta^d(p)$, concluding that either $\Delta^d(p)=0$ or
  $|\Delta^d(p)|\ge H^{-d\mu}$. On the other hand an analytic estimate
  shows that for an appropriate choice of $r,d$ as above we have
  $|\Delta^d(p)|< \tfrac12 H^{-d\mu}$. These contradicting estimates
  force $\Delta^d(p)=0$ on $\phi(J^m)$ and finish the proof.

  In our context, we would like to extend this to the case
  $p_1,\ldots,p_\mu\in U_\e(\phi(J^m))$. Let
  $q_1,\ldots,q_\mu\in\phi(J^m)$ with $\dist(p_i,q_i)\le\e$. Then
  $\Delta^d(q)< \tfrac12 H^{-d\mu}$ as above, and if show
  $|\Delta^d(q)-\Delta^d(p)|< \tfrac12 H^{-d\mu}$ we can finish as
  above. One easily estimates
  $\norm{\partial\Delta^d(p)/\partial p}\le d\mu!$ in
  $I^{\mu\times(n+1)}$, so choosing
  $\e\sim\tfrac12 H^{-d\mu}/[d(\mu+2)!]$ suffices.

  We now consider Wilkie's approach. Using Siegel's lemma one
  constructs a polynomial $P\in\Z[x_1,\ldots,x_{m+1}]$ of degree $d$
  and coefficients bounded by some $N$ such that
  $P(\phi_1,\ldots,\phi_{m+1})$ has many small Taylor
  coefficients. Liouville's inequality gives for any $x\in X(g,H)$
  that either $P(x)=0$ or
  \begin{equation}
    |P(x)| \ge (d^{m+1}N H^{d(m+1)})^g.
  \end{equation}
  Denote the right-hand side by $R$. Wilkie shows that for an
  appropriate choice of $r,d$ as above (and also
  $\log N=\poly_m(g,\log H)$) we have $|P(x)|<R/2$ whenever
  $x\in\phi(J^m)$, thus forcing $P$ to vanish on $x$. In fact Wilkie
  considered $X(\Q,H)$ and the one-dimensional case, but see
  \cite[Proposition~16]{habegger:approximation} or
  \cite[Proposition~28]{me:qfol-geometry} for a treatment of the
  general case.

  If we take $x\in U_\e(\phi(I^m))$ instead, and $x_0\in\phi(I^m)$
  with $\dist(x,x_0)\le\e$, then as above it will suffice to show that
  $|P(x_0)-P(x)|<R/2$. The bounds on $d,N$ easily imply
  $\log \norm{\partial P/\partial x}<\poly_m(g,\log H)$ in $[0,1]^n$, so choosing
  an appropriate $\log\e=-\poly_m(g,\log H)$ suffices to finish the proof.
\end{proof}

We proceed to the proof of Theorem~\ref{thm:main-blocks}, which is
very similar to \cite{pila:blocks} and \cite{bjst:effective-pw} modulo
the sharper Proposition~\ref{prop:interpolation-step}. We proceed by
induction on dimension $m:=\dim X$, the zero-dimensional case being
trivial by \so-minimality. Suppose the claim is proved for $X$ of
dimension smaller than $m$.

Up to inverting and negating some coordinates (which does not affect
height) one can cut $X$ into pieces contained in $[0,1]^n$, so assume
this without loss of generality. Apply Lemma~\ref{lem:alg-lem} to $X$
to obtain a collection $\{\phi_\eta\}$ of size $\poly_\cF(D,g,\log H)$
and $\cup_\eta\phi_\eta(I^m)\sube X$ where $m=\dim X$. It will be
enough to consider each $\phi_\eta(I^m)$ separately, so fix one
$\phi=\phi_\eta$ and suppose $\phi(I^m)\sube X$. Using
Proposition~\ref{prop:interpolation-step} we can find for each
$J\subset\{1,\ldots,n\}$ of size $m+1$ a polynomial $P_J$ of degree
$d=\poly_n(g,\log H)$ in the coordinates $(x_i:i\in J)$ vanishing on
$X(g,H)$. The zero loci of these polynomials cut out an algebraic
variety $V$ of dimension at most $m$ in $\R^n$. Stratify $V$
(e.g. using \so-minimality of $\R^\alg$) and let $\{S_i\}$ denote the
top dimensional strata and $S'$ the union of the rest. Note
$S_i,S'\in\Omega^\alg_{O_n(1),\poly_n(d)}$ by \so-minimality. The
points in $S'\cap X$ are handled by induction on $m$. Now stratify
$X\cap S_i$ and denote by $\{B_{ij}\}$ the top dimensional strata and
by $B'$ the union of the rest (over all $S_i$). Note $\#\{B_{ij}\}$ is
again $\poly_\cF(D,g,\log H)$ by \so-minimality. Finally $B'\cap X$ is
similarly handled by induction on $m$, while $B_{ij}$ are by
definition basic blocks with semialgebraic closures $S_i$, which
finishes the proof.

\bibliographystyle{plain}
\bibliography{nrefs}

\end{document}